\documentclass[10pt,reqno]{amsart}
\usepackage{amssymb}
\usepackage{amsmath, amssymb, amsthm}
\usepackage{mathrsfs}
\numberwithin{equation}{section}
\usepackage{color}
\usepackage{hyperref}
\newtheorem{theorem}{Theorem}[section]

\newtheorem{lemma}[theorem]{Lemma}
\newtheorem{remark}{Remark}[section]
\newtheorem{corollary}[theorem]{Corollary}
\newtheorem{prop}{Proposition}[section]
\newtheorem{definition}{Definition} [section]
\newcommand{\dif}{\mathrm{d}}
\newcommand{\ddiv}{\mathrm{div}}

\begin{document}
\title[The Newtonian limit for compressible UCM model]{Global well-posedness and Newtonian limit of the compressible upper-convected Maxwell model in analytic  spaces}\author{Renyong Guan and Yuxi Hu}
 \thanks{\noindent  Renyong Guan,   Department of Mathematics, China University of Mining and Technology, Beijing, 100083, P.R. China, renyguan@163.com\\
\indent  Yuxi Hu, Department of Mathematics, China University of Mining and Technology, Beijing, 100083, P.R. China, yxhu86@163.com\\
 }
\begin{abstract}
In this paper, we investigate the compressible upper-convected Maxwell (UCM) system in \(\mathbb{R}^3\), which corresponds to the purely elastic case of the compressible Oldroyd-B model with vanishing retardation time. In this regime, the velocity equation loses the viscous dissipation term, and the constitutive nonlinearities induce a derivative loss, so that the standard approaches in Sobolev or finitely regular Besov spaces fail. We therefore work in analytic function spaces, realized as exponentially weighted hybrid Besov spaces, and develop a global-in-time analytic framework based on Littlewood--Paley analysis and uniform-in-\(\tau\) energy estimates. A key ingredient is the introduction of the kinematic stress tensor, which allows us to establish the required  a priori estimates in hybrid Besov spaces. Based on these estimates, we prove the existence of a unique global-in-time analytic solution for small analytic initial data. Moreover, for ill-prepared data, we rigorously justify the global-in-time relaxation (Newtonian) limit as the relaxation parameter \(\tau\to0\), with an explicit convergence rate of order \(O(\sqrt{\tau})\).
 \\[0.5em]
\textbf{Keywords}: analytic solution; compressible upper-convected Maxwell model; Newtonian limit; uniform-in-\(\tau\) estimates; hybrid Besov spaces\\[0.5em]
\noindent\textbf{MSC 2020:} 76A10, 76N10, 35B25

\end{abstract}
\maketitle
\section{Introduction}
\subsection{The model}
\label{sec:model}

Viscoelastic fluids are complex fluids that exhibit both viscous and elastic responses. Typical examples range from incompressible viscoelastic fluids, such as polymer solutions and melts, blood and liquid-crystalline polymers, to compressible viscoelastic fluids, such as polymer foams, compressible polymer melts in foaming processes, shock-loaded polymers, and geophysical flows.The essential feature of such fluids is that the stress at a material point depends not only on the instantaneous strain rate but also on the deformation history; this memory effect is characterized by relaxation and retardation times.

The Oldroyd-B (OB) model, introduced by Oldroyd \cite{ref4}, is one of the most fundamental constitutive models for viscoelastic fluids. It is formulated as follows:
\begin{equation}\label{odmt}
	\tau \frac{\mathcal{D}\mathbf{T}}{\mathcal{D}t}+\mathbf{T}
	=2\mu_0\left(\mathbf{D}(\mathbf{u})+\tilde{\tau}\frac{\mathcal{D}\mathbf{D}(\mathbf{u})}{\mathcal{D}t}\right),
\end{equation}
where $\mathbf{T}$ is the stress tensor and $\mathbf{u}$ is the fluid velocity. $\tau$, $\mu_0$, and $\tilde{\tau}$ denote the relaxation time, kinematic viscosity, and retardation time, respectively; and $\frac{\mathcal{D}}{\mathcal{D}t}$ is the Oldroyd  derivative, defined by
\[
\frac{\mathcal{D}\mathbf{T}}{\mathcal{D}t}
:=\partial_t\mathbf{T}+\mathbf{u}\cdot\nabla\mathbf{T}
+\mathbf{T}\mathbf{W}(\mathbf{u})-\mathbf{W}(\mathbf{u})\mathbf{T}
-b(\mathbf{T}\mathbf{D}(\mathbf{u})+\mathbf{D}(\mathbf{u})\mathbf{T}),
\]
with
\[
\mathbf{D}(\mathbf{u})=\frac{1}{2}\bigl(\nabla\mathbf{u}+(\nabla\mathbf{u})^T\bigr),
\qquad
\mathbf{W}(\mathbf{u})=\frac{1}{2}\bigl(\nabla\mathbf{u}-(\nabla\mathbf{u})^T\bigr).
\]
Here $b\in[-1,1]$ is a constant. In particular, for $b=-1,0,1$, the corresponding Oldroyd constitutive relations are called the lower-convected, corotational, and upper-convected Maxwell models, respectively. Note that if we set $\tau=\tilde{\tau}=0$ in \eqref{odmt}, it reduces to the compressible Newtonian fluid.

For an incompressible viscoelastic fluid, the mass conservation reduces to the divergence-free condition \(\nabla\cdot\mathbf{u}=0\). By absorbing the retardation term into the momentum equation through a modified stress tensor, the incompressible Oldroyd-B system can be written as
\begin{equation}\label{incomp_OB}
	\begin{cases}
		\nabla\cdot\mathbf{u}=0,\\
		\rho\left(\mathbf{u}_t+\mathbf{u}\cdot\nabla\mathbf{u}\right)+\nabla \pi
		-\mu_0\frac{\tilde{\tau}}{\tau}\Delta\mathbf{u}=\nabla\cdot\mathbf{T},\\
		\tau \frac{\mathcal{D}\mathbf{T}}{\mathcal{D}t}+\mathbf{T}
		=2\mu_0\left(1-\frac{\tilde{\tau}}{\tau}\right)\mathbf{D}(\mathbf{u}),
	\end{cases}
\end{equation}
Here \(\pi\) denotes the pressure, which acts as the Lagrange multiplier associated with the incompressibility constraint \(\nabla\cdot\mathbf{u}=0\).

For a compressible viscoelastic fluid,  by combining the mass and momentum conservation laws and absorbing the retardation term into the momentum equation, one obtains  the compressible Oldroyd-B system
\begin{equation}\label{comp_OB}
	\begin{cases}
		\rho_t+\nabla\cdot(\rho\mathbf{u})=0,\\
		\rho\mathbf{u}_t+\rho\mathbf{u}\cdot\nabla \mathbf{u}+\nabla p
		-\mu_0\frac{\tilde{\tau}}{\tau}\bigl(\Delta\mathbf{u}+\nabla(\nabla\cdot\mathbf{u})\bigr)=\nabla\cdot\mathbf{T},\\
		\tau \frac{\mathcal{D}\mathbf{T}}{\mathcal{D}t}+\mathbf{T}
		=2\mu_0\left(1-\frac{\tilde{\tau}}{\tau}\right)\mathbf{D}(\mathbf{u}).
	\end{cases}
\end{equation}
Here and in what follows, \(p=p(\rho)\) denotes the pressure, which is given by the \(\gamma\)-law
\[
p(\rho)=A\rho^\gamma,\qquad A>0,\quad \gamma\ge 1,
\]
where \(\gamma\) is the adiabatic exponent. The additional term \(-\mu_0\frac{\tilde{\tau}}{\tau}(\Delta\mathbf{u}+\nabla(\nabla\cdot\mathbf{u}))\) in the momentum equation  is the second-order dissipative term induced by the retardation time; in the incompressible case it reduces to \(-\mu_0\frac{\tilde{\tau}}{\tau}\Delta\mathbf{u}\).  It vanishes in the purely elastic case \(\tilde{\tau}=0\).

Note that the constitutive relation \eqref{odmt} was originally derived for incompressible fluids; in the compressible case, additional terms accounting for compressibility effects should be incorporated. In particular, the volumetric stress contribution due to compressibility (i.e., \(\nabla\cdot\mathbf{u}\neq0\)) is missing in \eqref{odmt}. Motivated by the above considerations,  Bollada and Phillips \cite{ref3} (see also \cite{ref5,ref6}) considered, for compressible viscoelastic fluids,  the following rheological equation of state:
\begin{equation}\label{1.2}
	\tau\left(\overset{\triangledown}{\mathbf{T}}+(\nabla\cdot\mathbf{u})\mathbf{T}\right)+\mathbf{T}
	=\tilde{\mu}(\rho)2\mathbf{D}(\mathbf{u})+\lambda(\rho)(\nabla\cdot\mathbf{u})\mathrm{I}_n,
\end{equation}
where  \(\overset{\triangledown}{\mathbf{T}}\) denotes the upper-convective derivative (see Oldroyd \cite{ref4}), defined by
\[
\overset{\triangledown}{\mathbf{T}}
:=\partial_t\mathbf{T}+\mathbf{u}\cdot\nabla\mathbf{T}
-\nabla\mathbf{u}\cdot\mathbf{T}
-\mathbf{T}\cdot(\nabla\mathbf{u})^T.
\]
The additional term \((\nabla\cdot\mathbf{u})\mathbf{T}\) on the left hand side of \eqref{1.2} stems from compressibility effects and is originally due to Truesdell \cite{ref7}. 
In \eqref{1.2}, the second viscosity coefficient  $\lambda(\rho)$  is assumed to satisfy the relation
\begin{align}\label{BD}
\lambda(\rho)=\rho\frac{\mathrm{d}\tilde{\mu}}{\mathrm{d}\rho}-\tilde{\mu}(\rho),
\end{align}
where  \(\tilde{\mu}(\rho)\)  denotes the shear viscosity coefficient. It is worth noting that the relation \eqref{BD} exhibits a structure analogous to that of the well-known \textit{BD entropy}, originally introduced by Bresch and Desjardins \cite{Bresch2003cmp} in the context of the compressible Navier-Stokes equations.

Recall that the constitutive equation \eqref{1.2} corresponds to the purely elastic case with vanishing retardation time, i.e., \(\tilde{\tau}=0\). Consequently, the velocity equation contains no viscous dissipation. For the subsequent uniform-in-\(\tau\) analysis, we assume that the shear viscosity is proportional to the density, i.e., \(\tilde{\mu}=\mu\rho\), where \(\mu>0\) is a constant. Equivalently, the kinematic viscosity
\[
\nu=\frac{\tilde{\mu}}{\rho}=\mu
\]
is constant, so that density variations do not affect momentum diffusion. This is consistent with the phenomenological observation that compressible polymeric fluids become ``thicker'' as the density increases. From a microscopic viewpoint, a higher density corresponds to a larger number of polymer chains or frictional units per unit volume, leading to stronger chain--chain friction and hence a larger dynamic viscosity. As a consequence of this assumption, the BD entropy-related coefficient vanishes,
\[
\lambda(\rho)=\rho\frac{\mathrm{d}\tilde{\mu}}{\mathrm{d}\rho}-\tilde{\mu}(\rho)=0,
\]
and the compressible Oldroyd-B system reduces to the target compressible UCM model:
\begin{equation}\label{yucm}
	\begin{cases}
		\rho_t +\nabla \cdot(\rho\mathbf{u})=0,\\
		\rho \mathbf{u}_t+\rho\mathbf{u}\cdot\nabla \mathbf{u}+\nabla p=\nabla\cdot\mathbf{T},\\
		\tau\left(\mathbf{T}_t+\mathbf{u}\cdot\nabla \mathbf{T}-\nabla\mathbf{u}\cdot\mathbf{T}-\mathbf{T}\cdot(\nabla\mathbf{u})^T+(\nabla\cdot\mathbf{u})\mathbf{T}\right)+\mathbf{T}=2\mu\rho\mathbf{D}(\mathbf{u}).
	\end{cases}
\end{equation}
In this paper, we  study  the uniform-in-\(\tau\) global well-posedness and the relaxation limit for system \eqref{yucm} in analytic function spaces. We shall review the relevant literature in Section 1.2, outline the main strategy in Section 1.3, and state the main results in Section 1.4.

\subsection{Related works}
For the Oldroyd-B model with positive retardation time (\(\tilde{\tau}>0\)), the momentum equation contains a second-order dissipative term of the form \(-\mu\frac{\tilde{\tau}}{\tau}\Delta\mathbf{u}\) (together with its compressible counterpart), which provides enough regularity to compensate for the derivative loss caused by the constitutive nonlinearities. Consequently, the well-posedness theory in this regime can be developed within Sobolev or Besov spaces of finite regularity.

For the incompressible Oldroyd-B model, this theory is now relatively mature. Guillop\'e and Saut \cite{refp13} pioneered the proof of local well-posedness and the global existence of small smooth solutions, while Lions and Masmoudi \cite{refp20} constructed global large renormalized weak solutions in the corotational case, and Chemin and Masmoudi \cite{ref2} established global well-posedness in critical \(L^p\) frameworks. For further extensions to exterior domains and critical Besov spaces, see \cite{molinet2004global,hieber2012global,fang2013global,refp11,refp8} and the references therein.

For the compressible Oldroyd-B model with \(\tilde{\tau}>0\), the theory is less complete but has developed rapidly. Fang and Zi \cite{fang2013strong} proved the local existence of strong solutions with large data and vacuum. Zi \cite{zi2017global} and Guo et al. \cite{guo2024stability} obtained global well-posedness and stability in critical spaces and periodic domains for small data. Wang and Wen \cite{wang2020cauchy} established global strong solutions near a non-vacuum equilibrium in \(H^s(\mathbb{R}^3)\). Recently, Liu et al. \cite{liu2023global,liu2023large} constructed global weak and strong solutions for the viscous and diffusive Oldroyd-B model with large oscillations and vacuum; see also \cite{refp14,refp25,refp27} for related well-posedness results.


Concerning singular limits for Oldroyd-B type models, while various limits (such as the low Mach number limit and its combinations) have been studied for the classical Oldroyd-B model \cite{refp17,guillope2010regular,ren2015strong,fang2014incompressible,zhang2025combined}, the rigorous justification of the relaxation limit (\(\tau \to 0\)) for compressible UCM model \eqref{yucm} in analytic function spaces remains an open problem. This is the main goal of the present paper.

A particularly challenging branch is the Oldroyd-B model without retardation (\(\tilde{\tau}=0\)), where the velocity equation loses the second-order dissipative term entirely, making the standard Sobolev-space approach fail due to the derivative loss. For the incompressible case, Zi \cite{refp30} developed the analytic framework for the 3D incompressible Oldroyd-B model and studied its vanishing viscosity limit. For the compressible counterpart, Pan \cite{pan} extended this analytic framework  and established global analytic solutions near a non-vacuum equilibrium by exploiting the pure damping effects of the linearized equations. Building on this, Pan and Yang \cite{pan2024globallyexponential} further studied the case on the torus \(\mathbb{T}^3\), deriving exponential decay and rigorously proving the inviscid limit.

Recall that in Section 1.1, the compressible UCM model was derived from first principles by Bollada and Phillips \cite{ref3}, whose constitutive equation is given by \eqref{1.2}. Under the assumption of constant kinematic viscosity, this model reduces to system \eqref{yucm}, which is the focus of the present paper. Compared with the extensive studies on the incompressible Oldroyd-B model or the classical compressible Navier--Stokes equations, the mathematical theory for this specific model remains relatively less explored. 

A first well-posedness result was obtained by Boyaval \cite{ref9}, who reformulated the compressible UCM system as a symmetric-hyperbolic system of conservation laws by introducing an additional material metric variable, and established the local well-posedness of classical solutions in Sobolev spaces via the Godunov--Mock theorem. Subsequently, Wang, Boyaval, and Hu \cite{refaml} proved that solutions blow up in finite time for large initial data by employing a convex entropy constructed in \cite{ref9}, and obtained the global well-posedness in the one-dimensional case in \cite{ref10}. Although Pan \cite{pan} established global analytic solutions for the compressible UCM model without retardation, the relaxation parameter \(\tau\) was normalized to a fixed constant in that work. Consequently, the uniform-in-\(\tau\) estimates and the rigorous justification of the relaxation limit (i.e., the Newtonian limit) as \(\tau \to 0\) for system \eqref{yucm} have not been addressed yet. This motivates the present paper.

\subsection{Strategy}

The compressible UCM system \eqref{yucm} leads to a loss of derivative when performing energy estimates, since the momentum equation has no viscous dissipation in the purely elastic case \(\tilde{\tau}=0\).This derivative loss prevents the standard Sobolev and finitely regular Besov space approaches from working. Although Boyaval \cite{ref9} reformulated the system as a symmetric-hyperbolic one by introducing an additional material metric variable, the resulting system is structurally complicated and does not readily yield \(\tau\)-independent estimates; consequently, the global well-posedness and the relaxation limit remain unresolved in that framework. We therefore work directly in analytic function spaces. Note that the global analytic framework has already been developed for the incompressible Navier-Stokes equations \cite{refz5,refz36,refz37}, the Prandtl equations \cite{refz43}, and Oldroyd-B type models \cite{refp30,pan}. Our contribution is not the analytic framework itself, but rather its adaptation to system \eqref{yucm} in a way that is compatible with the relaxation limit \(\tau\to0\).

Our proof strategy consists of three key ingredients.

First, motivated by Bollada and Phillips \cite{ref3}, we reformulate the system by introducing the kinematic stress tensor \(\mathbf{S}=\mathbf{T}/\rho\). This change of variables is not merely a technical convenience; it is essential for obtaining uniform-in-\(\tau\) estimates. Indeed, the natural scaling of the stress equation requires the energy functional to be equipped with a \(\sqrt{\tau}\) weight on the stress. If the momentum equation is linearized with \(\mathbf{T}\) as the unknown, the term \(\ddiv\mathbf{T}\) generates an additional nonlinear contribution of the form \((1/\rho-1)\ddiv\mathbf{T}\). Within the analytic Besov framework, this term cannot be bounded uniformly in \(\tau\), since the \(\sqrt{\tau}\) weight associated with \(\mathbf{T}\) is incompatible with the factor \(1/\rho-1\). The transformation \(\mathbf{S}=\mathbf{T}/\rho\) removes this obstruction. Because \(\ddiv\mathbf{T}=\rho\,\ddiv\mathbf{S}+\mathbf{S}\nabla\rho\), the problematic factor \((1/\rho-1)\ddiv\mathbf{T}\) vanishes after linearization, leaving only the well-behaved linear term \(\ddiv\mathbf{S}\). Although the reformulation introduces new nonlinear terms \(\mathbf{S}\nabla\rho\), they are structurally analogous to the original nonlinearities and can be treated within the same analytic framework.

Second, we derive the uniform-in-\(\tau\) a priori estimates. The central tool for this purpose is the analytic energy method with the weight \(e^{\Phi(t,\Lambda)}\), where \(\Phi(t,\xi)=(2\lambda_0-\lambda\theta(t))|\xi|\). This weight not only encodes the analytic radius but also, after commuting with the equations, generates an additional higher-order damping term of the form \(\lambda\dot{\theta}(t)|\xi|\). This damping is crucial because it compensates for the one-derivative loss caused by the nonlinear terms. To adapt the analytic radius dynamically, the function \(\theta(t)\) is defined self-consistently through the dissipative estimate itself, typically as \(\dot{\theta}(t)=\|(a_\Phi,\mathbf{u}_\Phi,\mathbf{S}_\Phi)\|_{\dot{B}_{2,1}^{3/2}}\) with \(\theta(0)=0\), so that the analytic radius decreases exactly at the rate dictated by the solution's dissipation. To handle the nonlinear products, we combine this analytic weight with the Littlewood--Paley decomposition and Bony's para-product decomposition, which splits the products into paraproduct and remainder terms. By performing a high-low frequency decomposition, we treat the regimes \(k\ge k_0\) and \(k\le k_0\) separately. The high frequencies exhibit a damping effect that yields an estimate in \(\dot{B}_{2,1}^{3/2}\), whereas the low-frequency dissipation is weaker and requires an estimate in \(\dot{B}_{2,1}^{1/2}\). This frequency-dependent behavior is precisely why the hybrid Besov norm \(\dot{B}_{2,1}^{1/2,3/2}\) is the natural energy space for system \eqref{yucm}. Consequently, the energy functional is designed such that the dissipation estimate for \(\mathbf{S}\) is measured in \(L^1(\dot{B}_{2,1}^{3/2})\), while the stress itself appears with the necessary \(\sqrt{\tau}\) weight.

Third, to rigorously justify the Newtonian limit, we analyze the error system between the UCM solutions and the limiting Navier-Stokes solutions. A major difficulty is that the unweighted stress error \(\mathbf{S}^\tau - 2\mu\mathbf{D}(\mathbf{u}^0)\) is of order \(O(1)\) in \(\tau\), which prevents us from obtaining a \(\tau\)-independent estimate. We therefore introduce the weighted error \(\sqrt{\tau}\mathbf{S}^\tau - 2\mu\sqrt{\tau}\mathbf{D}(\mathbf{u}^0)\). Moreover, closing the error estimates requires the limiting Navier-Stokes solution to possess higher-order regularity, specifically \(e^{2\lambda_0\Lambda}(\rho_0^0-1, \mathbf{u}_0^0) \in \dot{B}_{2,1}^{5/2} \cap \dot{B}_{2,1}^{7/2}\), which allows us to handle the strong coupling between the density and stress variables. A subtlety also arises in the dissipative estimates: although the analytic weight would naturally provide a higher-order bound for the stress error in \(L^1_t(\dot{B}_{2,1}^{3/2,5/2})\), the nonlinear term \(\tilde{\mathbf{S}}\nabla a^0\) forces us to lower the stress-dissipation to the \(\dot{B}_{2,1}^{3/2}\) level. With these appropriate adjustments, all error terms become uniformly controllable, yielding an explicit convergence rate of \(O(\sqrt{\tau})\). Furthermore, our framework is robust enough to handle ill-prepared initial data: if the initial mismatch is of \(O(1)\), the dissipative nature of the system ensures that the resulting initial-time boundary layer decays rapidly on a time scale of \(O(\tau)\), without affecting the macroscopic convergence as \(\tau \to 0\).

\subsection{Main results}
Following the strategy outlined above, we now perform the reformulation. Under the assumption of constant kinematic viscosity, i.e., \(\mu = \tilde{\mu}/\rho\), we introduce the kinematic stress tensor \(\mathbf{S} := \mathbf{T}/\rho\). Substituting this into \eqref{1.2} and using the mass conservation, the constitutive relation reduces to
\[
\tau\overset{\triangledown}{\mathbf{S}}+\mathbf{S}=2\mu\mathbf{D}(\mathbf{u}).
\]
Note that the coefficient on the right-hand side  of the above equation is the constant \(2\mu\) rather than a density-dependent quantity. This constant coefficient is essential for establishing uniform-in-\(\tau\) a priori estimates. Therefore, we obtain the following system:
\begin{equation}\label{ysys}
	\begin{cases}
		\rho_t +\nabla \cdot(\rho\mathbf{u})=0,\\
		\rho \mathbf{u}_t+\rho\mathbf{u}\cdot\nabla \mathbf{u}+\nabla p=\rho\ddiv\mathbf{S}+\mathbf{S}\nabla \rho,\\
		\tau\left(\mathbf{S}_t+\mathbf{u}\cdot\nabla \mathbf{S}-\nabla\mathbf{u}\cdot\mathbf{S}-\mathbf{S}\cdot(\nabla\mathbf{u})^T\right)+\mathbf{S}=2\mu\mathbf{D}(\mathbf{u}).
	\end{cases}
\end{equation}
We are interested in the Cauchy problem for system \eqref{ysys} with initial data
\begin{equation}\label{idata}
	(\rho, \mathbf{u}, \mathbf{S})(0,x)=(\rho_0, \mathbf{u}_0, \mathbf{S}_0).
\end{equation}


Applying the analytic framework to \eqref{ysys}--\eqref{idata} with small analytic perturbations around the non-vacuum equilibrium \((1,0,0)\), we obtain the following theorem.

\begin{theorem}\label{th1.1}
	Let $\lambda_0>0$ be a fixed constant. Assume that 
	\begin{equation}\label{taura}
	0\leq	\tau\leq\min\{1, \mu\}
	\end{equation}
	and the initial data $(\rho_0, \mathbf{u}_0, \mathbf{S}_0)$ satisfy $e^{2\lambda_0\Lambda}(\rho_0-1, \mathbf{u}_0, \sqrt{\tau}\mathbf{S}_0)\in \dot{B}^{\frac{1}{2}}_{2,1}\cap\dot{B}^{\frac{3}{2}}_{2,1}$. Then there exist two positive constants $\varepsilon_0$ and $C_0$, independent of $\lambda_0$ and $\tau$, such that for any $0<\varepsilon\leq \varepsilon_0$, if
	\[
	\|e^{2\lambda_0\Lambda}(\rho_0-1, \mathbf{u}_0, \sqrt{\tau}\mathbf{S}_0)\|_{\dot{B}^{\frac{1}{2}}_{2,1}\cap\dot{B}^{\frac{3}{2}}_{2,1}}\leq \varepsilon\lambda_0,
	\]
	then the initial problem \eqref{ysys}-\eqref{idata} admits a unique solution $(\rho, \mathbf{u}, \mathbf{S})(t,x)$ satisfying, 
	\begin{align*}
		&\|e^{\lambda_0\Lambda}(\rho-1, \mathbf{u}, \sqrt{\tau}\mathbf{S})\|_{L^\infty(\dot{B}^{\frac{1}{2}}_{2,1}\cap\dot{B}^{\frac{3}{2}}_{2,1})}+\|e^{\lambda_0\Lambda}(\rho-1, \mathbf{u}, \mathbf{S})\|_{L^1(\dot{B}^{\frac{3}{2}}_{2,1})}\\
		&\le C_0\|e^{2\lambda_0\Lambda}(\rho_0-1, \mathbf{u}_0, \sqrt{\tau}\mathbf{S}_0)\|_{\dot{B}^{\frac{1}{2}}_{2,1}\cap\dot{B}^{\frac{3}{2}}_{2,1}}.
	\end{align*}
\end{theorem}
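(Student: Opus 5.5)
We set $a=\rho-1$ and rewrite \eqref{ysys} as a perturbation of the equilibrium $(1,0,0)$. We normalize $p'(1)=1$; the general case only changes constants. The system becomes
\[
\partial_t a+\ddiv\mathbf{u}=F_1,\qquad
\partial_t\mathbf{u}+\nabla a-\ddiv\mathbf{S}=F_2,\qquad
\tau\partial_t\mathbf{S}+\mathbf{S}-2\mu\mathbf{D}(\mathbf{u})=\tau F_3 .
\]
The nonlinearities are
\[
F_1=-\ddiv(a\mathbf{u}),\qquad
F_2=-\mathbf{u}\cdot\nabla\mathbf{u}-\bigl(\tfrac{p'(1+a)}{1+a}-1\bigr)\nabla a+\tfrac{\mathbf{S}\nabla a}{1+a},
\]
\[
F_3=-\mathbf{u}\cdot\nabla\mathbf{S}+\nabla\mathbf{u}\,\mathbf{S}+\mathbf{S}(\nabla\mathbf{u})^T .
\]
Because $\mathbf{S}=\mathbf{T}/\rho$, no term of the form $(1/\rho-1)\ddiv\mathbf{T}$ appears.

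Introduce $\Phi(t,\xi)=(2\lambda_0-\lambda\theta(t))|\xi|$ with
\[
\dot\theta(t)=\|(a_\Phi,\mathbf{u}_\Phi,\mathbf{S}_\Phi)\|_{\dot B^{3/2}_{2,1}},\qquad \theta(0)=0,
\]
and set $f_\Phi=e^{\Phi(t,\Lambda)}f$. Applying $e^{\Phi}$ to the equations produces, for each unknown, an extra term $\lambda\dot\theta\Lambda f_\Phi$ on the left-hand side, which has a good sign. We then localize with $\dot\Delta_k$.

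The plan has four steps, in this order.

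\textbf{Step 1: linear localized estimates, uniform in $\tau$.} Take the $L^2$ inner product of the localized equations with $(a_k,\mathbf{u}_k,\frac{1}{2\mu}\tau\mathbf{S}_k)$. The cross terms $\ddiv\mathbf{S}\cdot\mathbf{u}$ and $\mathbf{D}(\mathbf{u}):\mathbf{S}$ cancel, and the dissipation $\frac{1}{2\mu}\|\mathbf{S}_k\|^2$ appears. Damping for $\mathbf{u}$ and $a$ comes from cross terms. For $\mathbf{u}$ we add $-\eta\tau\langle\mathbf{S}_k,\mathbf{D}(\mathbf{u}_k)\rangle$, which yields $\eta\, 2\mu\|\mathbf{D}\mathbf{u}_k\|^2$. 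For $a$ we add $\eta'\langle\mathbf{u}_k,\nabla a_k\rangle$. The coefficients must be chosen frequency-dependent so that the functional stays equivalent to $\|(a_k,\mathbf{u}_k,\sqrt\tau\mathbf{S}_k)\|^2$ uniformly in $\tau\le\min\{1,\mu\}$; this is where the restriction \eqref{taura} enters. The resulting bound is
\[
\frac{d}{dt}\mathcal L_k+c\,\min(2^{2k},1)\,\|(a_k,\mathbf{u}_k)\|^2+c\|\mathbf{S}_k\|^2+\lambda\dot\theta 2^k\mathcal L_k\lesssim \text{nonlinear terms}.
\]
At low frequencies $k\le k_0$ we multiply by $2^{k/2}$ and obtain parabolic decay, so $L^1_t\dot B^{1/2+2}$ dominates $L^1_t\dot B^{3/2}$. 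At high frequencies we multiply by $2^{3k/2}$ and obtain pure damping in $L^1_t\dot B^{3/2}$. Summing over $k$ gives
\[
\mathcal E(t)+\mathcal D(t)+\lambda\!\int_0^t\!\dot\theta\,\|(a,\mathbf{u},\sqrt\tau\mathbf{S})_\Phi\|_{\dot B^{3/2,5/2}}
\lesssim \mathcal E(0)+\text{NL},
\]
where $\mathcal E=\|(a,\mathbf{u},\sqrt\tau\mathbf{S})_\Phi\|_{L^\infty(\dot B^{1/2}\cap\dot B^{3/2})}$ and $\mathcal D=\|(a,\mathbf{u},\mathbf{S})_\Phi\|_{L^1\dot B^{3/2}}$.

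\textbf{Step 2: nonlinear estimates.} We use the analytic product law
\[
\|(fg)_\Phi\|_{\dot B^s}\lesssim\|f_\Phi\|_{\dot B^{3/2}}\|g_\Phi\|_{\dot B^s}+\dots,
\]
which follows from $\Phi(\xi)\le\Phi(\xi-\eta)+\Phi(\eta)$, together with Bony's decomposition. Terms with one derivative loss, such as $\mathbf{u}\cdot\nabla\mathbf{S}$ at top regularity, are bounded by
\[
\int_0^t\dot\theta\,\|\cdot_\Phi\|_{\dot B^{5/2}}\,dt.
\]
These are absorbed by the $\lambda\dot\theta$ term once $\lambda$ is a large absolute constant. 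The $\tau$-weights work out because in the $\mathbf{S}$-equation $\tau F_3$ is paired with $\mathbf{S}_k$, giving $\tau\langle F_3,\mathbf{S}_k\rangle=\langle\sqrt\tau F_3,\sqrt\tau\mathbf{S}_k\rangle$. The only non-transport term, $\mathbf{S}\nabla a/(1+a)$, is bounded by $\|\mathbf{S}_\Phi\|_{L^1\dot B^{3/2}}\|a_\Phi\|_{L^\infty\dot B^{3/2}}$, which is unweighted and therefore allowed. The composition $\frac{1}{1+a}$ is handled by a power series in $a$, which is legitimate in the analytic class when $\|a_\Phi\|_{\dot B^{3/2}}$ is small. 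The outcome is $\mathrm{NL}\lesssim\mathcal E\mathcal D$ plus the absorbable terms.

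\textbf{Step 3: bootstrap.} As long as $\theta(t)\le\lambda_0/\lambda$, we have $\Phi\ge\lambda_0|\xi|$, and smallness gives $\mathcal E+\mathcal D\le C_0\mathcal E(0)$. Since $\theta(t)\le\mathcal D\le C_0\varepsilon\lambda_0$, choosing $\varepsilon_0$ small ensures $\theta\le\lambda_0/(2\lambda)$ for all time, which closes the bootstrap. Global existence then follows from a standard Friedrichs or iteration scheme carried out in the same norms. Uniqueness follows from the same estimate applied to the difference of two solutions in a slightly lower analytic norm.

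\textbf{Main obstacle.} The hardest step is Step 1. We must build Lyapunov cross terms that give damping for $a$ and $\mathbf{u}$ at all frequencies while every constant stays independent of $\tau$. At high frequency the $\mathbf{u}$-damping arises only through $\mathbf{S}$, at rate about $\mu/(1+\tau 2^{2k}\cdots)$. I expect that the analytic $\lambda\dot\theta 2^k$ damping must be combined with a careful $\tau$-scaled choice of $k_0$, or with a splitting of $\mathbf{u}$ into its compressible and incompressible parts, to obtain the uniform $L^1\dot B^{3/2}$ control.
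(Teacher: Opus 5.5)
Your plan follows the paper's architecture: the weight $\Phi=(2\lambda_0-\lambda\theta)|\xi|$ with $\dot\theta=\|(a_\Phi,\mathbf u_\Phi,\mathbf S_\Phi)\|_{\dot B^{3/2}_{2,1}}$, localized Lyapunov functionals with cross terms and a split at $k_0$, analytic product laws whose one-derivative loss is absorbed by $\lambda\dot\theta$, and a bootstrap with Friedrichs approximation. However, you leave Step~1 open and put its difficulty in the wrong place.

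At high frequencies nothing $\tau$-dependent is needed. Take $\eta'=\Lambda^{-2}$ and $\eta=\mu^{-1}\Lambda^{-2}$ in your cross terms. This gives exactly the paper's $\langle\Lambda^{-1}\nabla a_{\Phi,k},\Lambda^{-1}\mathbf u_{\Phi,k}\rangle+\frac{\tau}{\mu}\langle\Lambda^{-1}\mathbf u_{\Phi,k},\Lambda^{-1}\ddiv\mathbf S_{\Phi,k}\rangle$ for $k\ge k_0$, with $k_0$ fixed.
\begin{itemize}
\item Differentiating the second term produces $\frac1\mu\langle\mu\Lambda\mathbf u_{\Phi,k},\Lambda^{-1}\mathbf u_{\Phi,k}\rangle=\|\mathbf u_{\Phi,k}\|_{L^2}^2$, with no $\tau$ in the constant.
\item The $\tau$-dependent errors, e.g.\ $\frac{\tau}{\mu}\|\Lambda^{-1}\ddiv\mathbf S_{\Phi,k}\|_{L^2}^2$, are absorbed by $\frac2\mu\|\mathbf S_{\Phi,k}\|_{L^2}^2$. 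This uses exactly $\tau\le\min\{1,\mu\}$, which also gives $E_k\le D_k$.
\item The decay loss you fear does not occur. The transverse eigenvalues solve $\tau\lambda^2+\lambda+\mu|\xi|^2=0$, so their real parts are $\le-\min\{\mu|\xi|^2,1/(2\tau)\}$.
\item A $\tau$-scaled $k_0$ would actually hurt. The constant in $\|\cdot\|_{\dot B^{1/2,3/2}_{2,1}}\approx\|\cdot\|_{\dot B^{1/2}_{2,1}\cap\dot B^{3/2}_{2,1}}$ is of size $2^{|k_0|}$.
\end{itemize}
A smaller slip: at the $\dot B^{3/2}_{2,1}$ level, $\mathbf S\nabla a/(1+a)$ costs $\|\mathbf S_\Phi\|_{\dot B^{3/2}_{2,1}}\|a_\Phi\|_{\dot B^{5/2}_{2,1}}$, not $\|a_\Phi\|_{\dot B^{3/2}_{2,1}}$. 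It is still absorbed by $\lambda\dot\theta$, as in Lemma~\ref{efa}.

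\textbf{The genuine gap is at low frequencies.} Your Step~1 target, with dissipation $c\min(2^{2k},1)\|(a_k,\mathbf u_k)\|^2$, is the right one there. But the conclusion you draw from it is backwards. For $k\le k_0$ one has $2^{5k/2}\le 2^{k_0}2^{3k/2}$, so parabolic decay from $\dot B^{1/2}_{2,1}$ data yields only $\|(a_\Phi,\mathbf u_\Phi)\|^l_{L^1_t(\dot B^{5/2}_{2,1})}$, which is the weaker quantity.

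No Lyapunov functional can repair this. Take the linearized system with $\theta\equiv0$, $a_0=0$, $\mathbf S_0=0$ and $\ddiv\mathbf u_0=0$. One checks that $|\hat{\mathbf u}(t,\xi)|\ge e^{-2\mu|\xi|^2t}|\hat{\mathbf u}_0(\xi)|$ whenever $4\tau\mu|\xi|^2<1$. Hence $\int_0^\infty\|e^{2\lambda_0\Lambda}\mathbf u\|_{\dot B^{3/2}_{2,1}}\,\dif t\gtrsim\mu^{-1}\sum_{k\ll0}2^{-k/2}\|\dot\Delta_k\mathbf u_0\|_{L^2}$. This diverges if, say, $\hat{\mathbf u}_0$ is supported in $\{|\xi|\le1\}$ with $\|\dot\Delta_k\mathbf u_0\|_{L^2}\approx\delta 2^{-k/2}k^{-2}$. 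Yet $\|e^{2\lambda_0\Lambda}\mathbf u_0\|_{\dot B^{1/2}_{2,1}\cap\dot B^{3/2}_{2,1}}=O(\delta)$.

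So neither $\mathcal D$ nor $\theta(t)=\int_0^t\dot\theta$ is controlled by the data, and Step~3 has nothing to stand on. You cannot simply drop the low frequencies from $\dot\theta$ either, because your Step~2 absorption uses $\dot\theta\ge\|\mathbf u_\Phi\|_{\dot B^{3/2}_{2,1}}$.

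You will not find a usable version of this step in the paper. Its low-frequency inequality asserts a dissipation $\frac18 2^kD_k^2$, i.e.\ rate $2^k$ instead of $\mu2^{2k}$. But the term $\frac{3}{8\mu}2^{-k}\langle\ddiv\mathbf S_{\Phi,k},\mathbf u_{\Phi,k}\rangle$ is of size $\mu^{-1}\|\mathbf S_{\Phi,k}\|_{L^2}\|\mathbf u_{\Phi,k}\|_{L^2}$. It cannot be absorbed by $\frac2\mu\|\mathbf S_{\Phi,k}\|_{L^2}^2+C2^k\|\mathbf u_{\Phi,k}\|_{L^2}^2$ as $k\to-\infty$, consistent with the example above.

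Closing the argument requires changing the framework. One option is low-frequency data in $\dot B^{-1/2}_{2,1}$, so that the two-derivative parabolic gain lands exactly in $L^1_t(\dot B^{3/2}_{2,1})$. Another is to let the low frequencies enter $\dot\theta$ only through time-integrable, Lipschitz-level norms. That route needs a finer treatment of low-frequency coefficients, and it weakens the conclusion for $(\rho-1,\mathbf u)$ to $L^1(\dot B^{5/2,3/2}_{2,1})$.
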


\begin{remark}\label{rem:tau-independent}
	It is worth emphasizing that the constant $C_0$ and the a priori estimates established in Theorem \ref{th1.1} are independent of the parameter $\tau$. This uniform-in-$\tau$ property is crucial, as it allows us to pass to the limit $\tau \to 0$ and naturally derive the global well-posedness for the limiting system as a direct consequence.
\end{remark}

If we set $\tau=0$, then our system reduces to the classical Navier-Stokes equations
\begin{equation}\label{jdxt}
	\begin{cases}
		\rho_t +\nabla \cdot(\rho\mathbf{u})=0,\\
		\rho \mathbf{u}_t+\rho\mathbf{u}\cdot\nabla \mathbf{u}+\nabla p=2\mu\ddiv(\rho\mathbf{D}(\mathbf{u})),
	\end{cases}
\end{equation}
and initial data
\begin{equation}\label{jdxt1.1}
(\rho, \mathbf{u})(0,x)=(\rho_0, \mathbf{u}_0)(x).
\end{equation}

Then, we have the following corollary.

\begin{corollary}\label{jd1.1}
	Assume that the initial data $(\rho_0, \mathbf{u}_0)$ satisfy $e^{2\lambda_0\Lambda}(\rho_0-1, \mathbf{u}_0)\in \dot{B}^{\frac{1}{2}}_{2,1}\cap\dot{B}^{\frac{3}{2}}_{2,1}$. Then there exist two positive constants $\varepsilon_0$ and $C_0$, independent of $\lambda_0$, such that for any $0<\varepsilon\leq \varepsilon_0$, if
	\[
	\|e^{2\lambda_0\Lambda}(\rho_0-1, \mathbf{u}_0)\|_{\dot{B}^{\frac{1}{2}}_{2,1}\cap\dot{B}^{\frac{3}{2}}_{2,1}}\leq \varepsilon\lambda_0,
	\]
	then the initial problem \eqref{jdxt}-\eqref{jdxt1.1} admits a unique solution $(\rho, \mathbf{u})(t,x)$ satisfying, 
	\begin{align*}
		&\|e^{\lambda_0\Lambda}(\rho-1, \mathbf{u})\|_{L^\infty(\dot{B}^{\frac{1}{2}}_{2,1}\cap\dot{B}^{\frac{3}{2}}_{2,1})}+\|e^{\lambda_0\Lambda}(\rho-1, \mathbf{u})\|_{L^1(\dot{B}^{\frac{3}{2}}_{2,1})}+\|e^{\lambda_0\Lambda} \mathbf{u}\|_{L^1(\dot{B}^{\frac{5}{2}}_{2,1})}\\
		&	\le C_0	\|e^{2\lambda_0\Lambda}(\rho_0-1, \mathbf{u}_0)\|_{\dot{B}^{\frac{1}{2}}_{2,1}\cap\dot{B}^{\frac{3}{2}}_{2,1}}.
	\end{align*}
\end{corollary}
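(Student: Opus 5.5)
The plan is to obtain Corollary \ref{jd1.1} from the uniform-in-$\tau$ estimates of Theorem \ref{th1.1} by compactness, passing to the limit $\tau\to0$. The extra bound $\|e^{\lambda_0\Lambda}\mathbf{u}\|_{L^1(\dot B^{5/2}_{2,1})}$ will come from the parabolic structure of the limit system.

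\textbf{Step 1 (approximate solutions).} Fix $(\rho_0,\mathbf{u}_0)$ as in the corollary. For each $\tau\in(0,\min\{1,\mu\}]$ take $\mathbf{S}_0^\tau=2\mu\mathbf{D}(\mathbf{u}_0)$, or simply $\mathbf{S}_0^\tau=0$. In the second case the smallness hypothesis of Theorem \ref{th1.1} holds with the same $\varepsilon\lambda_0$. In the first case $\sqrt\tau\,\mathbf{S}^\tau_0$ costs one extra derivative, which is harmless only for frequency-localized data, so I prefer $\mathbf{S}_0^\tau=0$ (ill-prepared data). Theorem \ref{th1.1} then yields solutions $(\rho^\tau,\mathbf{u}^\tau,\mathbf{S}^\tau)$ with the bound
\[
\|e^{\lambda_0\Lambda}(\rho^\tau-1,\mathbf{u}^\tau,\sqrt\tau\mathbf{S}^\tau)\|_{L^\infty(\dot B^{1/2}_{2,1}\cap\dot B^{3/2}_{2,1})}+\|e^{\lambda_0\Lambda}(\rho^\tau-1,\mathbf{u}^\tau,\mathbf{S}^\tau)\|_{L^1(\dot B^{3/2}_{2,1})}\le C_0 E_0 .
\]
Here $E_0$ denotes the initial norm, and the right-hand side is independent of $\tau$.

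\textbf{Step 2 (compactness and limit).} From the equations, $\partial_t\rho^\tau$ and $\partial_t\mathbf{u}^\tau$ are bounded in $L^1_{loc}$ or $L^2_{loc}$ with values in a negative-order Besov space, uniformly in $\tau$. Analyticity (the weight $e^{\lambda_0\Lambda}$) makes the embedding into $\dot B^{s}_{2,1,loc}$ compact for any $s$. By Aubin--Lions we extract $(\rho^\tau,\mathbf{u}^\tau)\to(\rho,\mathbf{u})$ strongly in $L^2_{loc}$ of high local regularity, and $\mathbf{S}^\tau\rightharpoonup\mathbf{S}$ weakly in $L^1(\dot B^{3/2}_{2,1})$ (weak-$*$ in measure-valued sense, using Fatou on norms). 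In the stress equation, $\tau(\partial_t\mathbf{S}^\tau+\mathbf{u}^\tau\cdot\nabla\mathbf{S}^\tau-\dots)$ tends to $0$ in distributions: $\tau\partial_t\mathbf{S}^\tau=\sqrt\tau\,\partial_t(\sqrt\tau\mathbf{S}^\tau)$ and $\sqrt\tau\mathbf{S}^\tau$ is bounded in $L^\infty$. The nonlinear terms carry $\tau\mathbf{S}^\tau$, with $\mathbf{S}^\tau$ bounded in $L^1$. Hence $\mathbf{S}=2\mu\mathbf{D}(\mathbf{u})$. Passing to the limit in the momentum equation, using $\rho\,\ddiv\mathbf{S}+\mathbf{S}\nabla\rho=\ddiv(\rho\mathbf{S})$, gives \eqref{jdxt}. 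Lower semicontinuity yields the first two terms of the estimate.

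\textbf{Step 3 (the $L^1(\dot B^{5/2})$ bound).} The limit only gives $\mathbf{D}(\mathbf{u})=\mathbf{S}/(2\mu)\in L^1(\dot B^{3/2})$ with analytic weight, and hence $\nabla\mathbf{u}$ in this space, by combining $\mathbf{D}(\mathbf{u})$ with the reconstruction $\partial_j\partial_k u_i=\partial_j D_{ik}+\partial_k D_{ij}-\partial_i D_{jk}$. Indeed $\nabla^2\mathbf{u}$ is expressed through $\nabla\mathbf{D}$, so $\|\mathbf{u}\|_{\dot B^{5/2}}\lesssim\|\mathbf{D}(\mathbf{u})\|_{\dot B^{3/2}}$, with the multiplier bounded on each dyadic block and commuting with $e^{\lambda_0\Lambda}$. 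This gives $\|e^{\lambda_0\Lambda}\mathbf{u}\|_{L^1(\dot B^{5/2}_{2,1})}\le C\|e^{\lambda_0\Lambda}\mathbf{S}\|_{L^1(\dot B^{3/2}_{2,1})}\le CC_0E_0$.

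\textbf{Step 4 (uniqueness).} Take the difference of two solutions of \eqref{jdxt}. Estimate it in a lower norm, e.g.\ $\dot B^{1/2}_{2,1}$ for $(\delta\rho,\delta\mathbf{u})$ with $\delta\mathbf{u}\in L^1(\dot B^{5/2})$, using the parabolic smoothing and the standard compressible Navier--Stokes difference estimates in critical spaces, and conclude by Gronwall. This can instead be quoted from the uniqueness part of Theorem \ref{th1.1}, applied with $\tau=0$, which the hypothesis $0\le\tau$ allows.

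\textbf{Main obstacle.} I expect the main difficulty to be the justification of the limit in the stress equation together with the identification $\mathbf{S}=2\mu\mathbf{D}(\mathbf{u})$ when the data are ill-prepared. Only $\sqrt\tau\mathbf{S}^\tau$ is bounded pointwise in time, so the initial layer must be absorbed in the $L^1$-in-time bound. I would handle this through the distributional formulation described in Step 2.
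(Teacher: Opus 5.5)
Your proposal is correct and follows the route the paper itself indicates. The paper gives no separate argument for Corollary \ref{jd1.1} beyond Remark \ref{rem:tau-independent}, which passes to the limit $\tau\to0$ in the uniform-in-$\tau$ bounds of Theorem \ref{th1.1}; the limit stress is $2\mu\mathbf{D}(\mathbf{u})$, so the $L^1(\dot B^{3/2}_{2,1})$ bound on $e^{\lambda_0\Lambda}\mathbf{S}$ becomes the $L^1(\dot B^{5/2}_{2,1})$ bound on $e^{\lambda_0\Lambda}\mathbf{u}$, and your compactness, identification and Korn-type reconstruction steps supply the details the paper leaves implicit.
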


Having established the uniform-in-$\tau$ estimates in Theorem \ref{th1.1} and the well-posedness of the limiting system in Corollary \ref{jd1.1}, we are now in a position to justify the convergence. The main result is stated as follows.

\begin{theorem} \label{th1.2}
	Let $(\rho^\tau, \mathbf{u}^\tau, \mathbf{S}^\tau)$ be the global solution of \eqref{ysys}-\eqref{idata} and $(\rho^0, \mathbf{u}^0)$ be the solution of the classical Navier-Stokes equations \eqref{jdxt}-\eqref{jdxt1.1}. If, in addition, the initial data $(\rho^0_0, \mathbf{u}^0_0)$ satisfy $e^{2\lambda_0\Lambda}(\rho^0_0, \mathbf{u}^0_0)\in\dot{B}^{\frac{5}{2}}_{2,1}\cap\dot{B}^{\frac{7}{2}}_{2,1}$ and
	\begin{equation}\label{scjs}
		\|e^{2\lambda_0\Lambda}(\rho^\tau_0-\rho^0_0, \mathbf{u}^\tau_0-\mathbf{u}^0_0, \sqrt{\tau}\mathbf{S}^\tau_0-2\mu\sqrt{\tau}\mathbf{D}(\mathbf{u}^0_0))\|_{\dot{B}^{\frac{3}{2}}_{2,1}}\leq C\sqrt{\tau},
	\end{equation}
	then, 
	\begin{align*}
		&\|e^{\lambda_0\Lambda}(\rho^\tau-\rho^0, \mathbf{u}^\tau-\mathbf{u}^0, \sqrt{\tau}\mathbf{S}^\tau-2\mu\sqrt{\tau}\mathbf{D}(\mathbf{u}^0))\|_{L^\infty(\dot{B}^{\frac{3}{2}}_{2,1})}\\	&\quad\quad+\|e^{\lambda_0\Lambda}(\rho^\tau-\rho^0, \mathbf{u}^\tau-\mathbf{u}^0)\|_{L^1(\dot{B}^{\frac{5}{2},\frac{3}{2}}_{2,1})}	+\|e^{\lambda_0\Lambda}(\mathbf{S}^\tau-2\mu\mathbf{D}(\mathbf{u}^0))\|_{L^1(\dot{B}^{\frac{3}{2}}_{2,1})}	\\
		&\leq C\left(
		\|e^{2\lambda_0\Lambda}( \rho ^\tau_0-\rho^0_0,\mathbf{u}^\tau_0-{\mathbf u}^0_0,\sqrt{\tau}{\mathbf S}^\tau-2\mu\sqrt{\tau}\mathbf{D}(\mathbf{u}^0_0))\|_{\dot B_{2,1}^{\frac{3}{2}}}
	+\sqrt{\tau}\|e^{2\lambda_0\Lambda}(\rho_0^0-1,\mathbf{u}^0_0)\|_{\dot{B}^{\frac{5}{2}}_{2,1}\cap\dot{B}^{\frac{7}{2}}_{2,1}}
		\right)\\&\quad\quad\times \exp\left\{C\|e^{2\lambda_0\Lambda}(\rho_0^0-1,\mathbf{u}_0^0)\|_{\dot{B}^{\frac{3}{2}}_{2,1}\cap\dot{B}^{\frac{5}{2}}_{2,1}}\right\}\\
		&\leq C\sqrt{\tau}.
	\end{align*}
	Thus, as $\sqrt{\tau}\to 0$,
	\[
	e^{\lambda_0\Lambda}(\rho^\tau-\rho^0, \mathbf{u}^\tau-\mathbf{u}^0)\longrightarrow 0 \quad \text{strongly in}\quad L^\infty(\dot{B}^{\frac{3}{2}}_{2,1}),
	\]
	and
	\[
e^{\lambda_0\Lambda}(\mathbf{S}^\tau-2\mu\mathbf{D}(\mathbf{u}^0))\longrightarrow 0 \quad \text{strongly in}\quad L^1(\dot{B}^{\frac{3}{2}}_{2,1}).
	\]
\end{theorem}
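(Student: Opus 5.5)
We plan to prove Theorem \ref{th1.2} by an analytic-weighted energy estimate on the error system, with the same Littlewood--Paley and high/low frequency machinery used for Theorem \ref{th1.1}.

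\textbf{The error system.} Set $a^\tau=\rho^\tau-1$ and $a^0=\rho^0-1$. Define the errors
\[
\tilde a=a^\tau-a^0,\qquad \tilde{\mathbf u}=\mathbf u^\tau-\mathbf u^0,\qquad \tilde{\mathbf S}=\mathbf S^\tau-2\mu\mathbf D(\mathbf u^0).
\]
Subtracting \eqref{jdxt} from \eqref{ysys}, linearized around $(1,0,0)$, gives:
\begin{itemize}
\item a transport equation for $\tilde a$;
\item a momentum equation $\tilde{\mathbf u}_t+\gamma A\nabla\tilde a-\ddiv\tilde{\mathbf S}=\tilde F$;
\item a stress equation
\[
\tau\tilde{\mathbf S}_t+\tilde{\mathbf S}-2\mu\mathbf D(\tilde{\mathbf u})=-2\mu\tau\partial_t\mathbf D(\mathbf u^0)+\tau\tilde G .
\]
\end{itemize}
The nonlinear terms $\tilde F$ and $\tilde G$ are bilinear in the errors and the background solutions, for example $\tilde{\mathbf S}\nabla a^0$, $\mathbf S^\tau\nabla\tilde a$ and $\mathbf u^\tau\cdot\nabla\tilde{\mathbf S}$. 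The forcing $-2\mu\tau\partial_t\mathbf D(\mathbf u^0)$ is the only source term not involving the errors. Multiplying the stress equation by $\sqrt\tau$ turns it into $\sqrt\tau\,\partial_t(\sqrt\tau\tilde{\mathbf S})+\tilde{\mathbf S}=\dots$, which is the scaling of the linear part in Theorem \ref{th1.1}. The source then becomes $\sqrt\tau\cdot\sqrt\tau\,\partial_t\mathbf D(\mathbf u^0)$.

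\textbf{Linear estimates.} Apply $e^{\Phi(t,\Lambda)}$ and $\dot\Delta_k$, with
\[
\Phi=(\lambda_0-\lambda\theta(t))|\xi|,\qquad \dot\theta=\|(a^\tau,\mathbf u^\tau,\mathbf S^\tau)_\Phi\|_{\dot B^{3/2}_{2,1}}+\|(a^0,\mathbf u^0)_\Phi\|_{\dot B^{5/2}_{2,1}} .
\]
By Theorem \ref{th1.1}, Corollary \ref{jd1.1}, and the $\dot B^{5/2}\cap\dot B^{7/2}$ propagation for the limit system (obtained the same way), $\theta$ stays bounded by $C\varepsilon$. So the weight stays above $e^{\lambda_0\Lambda}$ once $\lambda$ is chosen suitably.

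We then run the same Lyapunov functional as in the proof of Theorem \ref{th1.1}, now only at the $\dot B^{3/2}$ level, including the cross term $\langle\tilde{\mathbf u}_k,\nabla\tilde a_k\rangle$ that produces damping of $\tilde a$. This yields:
\begin{itemize}
\item control of $\|(\tilde a,\tilde{\mathbf u},\sqrt\tau\tilde{\mathbf S})_\Phi\|_{L^\infty\dot B^{3/2}}$;
\item control of $\|\tilde{\mathbf S}_\Phi\|_{L^1\dot B^{3/2}}$;
\item control of $(\tilde a,\tilde{\mathbf u})_\Phi$ in $L^1$ of the hybrid space $\dot B^{5/2,3/2}$, with $\dot B^{5/2}$ at low frequencies and $\dot B^{3/2}$ at high frequencies.
\end{itemize}
The bound has the form (initial error) plus forcing terms. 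The term $\lambda\dot\theta|\xi|$ generated by the weight absorbs the one-derivative losses coming from $\mathbf u^\tau\cdot\nabla\tilde{\mathbf S}$ and $\mathbf S^\tau\nabla\tilde a$. Each is bounded by $\dot\theta\,\|\cdot\|_{\dot B^{2}}$-type products, handled via Bony decomposition as in Theorem \ref{th1.1}.

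\textbf{The source term.} We write $\partial_t\mathbf D(\mathbf u^0)$ using the Navier--Stokes equation. Its $L^1(\dot B^{3/2})$ norm, weighted, is bounded by
\[
\|(a^0,\mathbf u^0)_\Phi\|_{L^1(\dot B^{5/2}\cap\dot B^{7/2})}+\text{quadratic terms},
\]
and the linear part is $\lesssim\|e^{2\lambda_0\Lambda}(\rho_0^0-1,\mathbf u_0^0)\|_{\dot B^{5/2}\cap\dot B^{7/2}}$. Pairing this with $\sqrt\tau\tilde{\mathbf S}$ in the $L^\infty$ energy, or using the $\tau$-damping, gives a contribution $\sqrt\tau\,\|\cdot\|_{\dot B^{5/2}\cap\dot B^{7/2}}$. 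This is the middle term of the claimed bound.

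\textbf{Main obstacle and closing.} The hardest terms are the couplings with the background that are linear in the errors:
\begin{itemize}
\item $\tilde{\mathbf S}\nabla a^0$ in the momentum equation;
\item $\tilde a\,\ddiv\mathbf S^0$-type and $\tilde a\,\Delta\mathbf u^0$-type terms.
\end{itemize}
These carry no smallness in $\tilde\cdot$. We estimate them as
\[
\|\tilde{\mathbf S}\nabla a^0\|_{\dot B^{3/2}}\lesssim\|\tilde{\mathbf S}\|_{\dot B^{3/2}}\|a^0\|_{\dot B^{5/2}} .
\]
Here $\tilde{\mathbf S}$ is measured only in $L^1\dot B^{3/2}$ (which is why the stress dissipation is lowered) and $a^0$ in $L^\infty\dot B^{5/2}$. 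Alternatively we bound them by
\[
\|(a^0,\mathbf u^0)_\Phi\|_{\dot B^{3/2}\cap\dot B^{5/2}}\,\|(\tilde a,\tilde{\mathbf u},\sqrt\tau\tilde{\mathbf S})\|_{\dot B^{3/2}},
\]
with a time-integrable coefficient. Gronwall's inequality then produces the factor
\[
\exp\Big\{C\|e^{2\lambda_0\Lambda}(\rho_0^0-1,\mathbf u_0^0)\|_{\dot B^{3/2}\cap\dot B^{5/2}}\Big\}.
\]
Balancing the frequency regimes so that every linear-in-error coupling fits either the absorbable smallness or this Gronwall coefficient is the step I expect to need the most care. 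Finally, \eqref{scjs} gives the bound $C\sqrt\tau$, and the stated strong convergences follow.
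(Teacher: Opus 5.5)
Your route is the paper's. You use the same error variables with $\tilde{\mathbf S}=\mathbf S^\tau-2\mu\mathbf D(\mathbf u^0)$ and the same $\dot B^{3/2}_{2,1}$-level Lyapunov functional with the high/low split, which gives $L^1_t(\dot B^{5/2,3/2}_{2,1})$ for $(\tilde a,\tilde{\mathbf u})$ but only $L^1_t(\dot B^{3/2}_{2,1})$ for $\tilde{\mathbf S}$. You absorb $\mathbf u^\tau\cdot\nabla\tilde{\mathbf S}$ and $\mathbf S^\tau\nabla\tilde a$ by the analytic damping, pair the source $\tau\partial_t\mathbf D(\mathbf u^0)$ with $\sqrt\tau\tilde{\mathbf S}$, and close with Gronwall using the $\dot B^{5/2}_{2,1}$ norms of $(a^0,\mathbf u^0)$.

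The weight, however, is set up wrongly in two ways. First, your $(\lambda_0-\lambda\theta(t))|\xi|$ never exceeds $\lambda_0|\xi|$, so it cannot stay above $e^{\lambda_0\Lambda}$. The error weight has to start strictly above $\lambda_0|\xi|$; the paper uses $\Psi=(2\lambda_0-\nu\eta(t))|\xi|$. It must also stay below the weights $\Phi,\Gamma$ under which Theorem~\ref{th1.1}, Corollary~\ref{jd1.1} and \eqref{jdgj1} control the two solutions, because those are the norms the product estimates produce. Second, with $\|a^0\|_{\dot B^{5/2}_{2,1}}$ inside $\dot\theta$, your claim $\theta\le C\varepsilon$ is unfounded. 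The limit data are only assumed finite, not small, in $\dot B^{5/2}_{2,1}\cap\dot B^{7/2}_{2,1}$, so $\int_0^\infty\|a^0\|_{\dot B^{5/2}_{2,1}}$ need not be small and $\lambda\theta\le\lambda_0$ can fail. Remove that term: the derivative losses need only $\dot B^{3/2}_{2,1}$ norms in the rate, as in the paper's $\eta$, and $\|a^0\|_{\dot B^{5/2}_{2,1}}$ belongs in the Gronwall coefficient. Keeping $\|\mathbf u^0\|_{\dot B^{5/2}_{2,1}}$ is harmless, since it is $O(\varepsilon\lambda_0)$ in $L^1_t$ by Corollary~\ref{jd1.1}.

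Two further points. Through the momentum equation, $\partial_t\mathbf D(\mathbf u^0)$ in $\dot B^{3/2}_{2,1}$ costs $\|\mathbf u^0\|_{\dot B^{9/2}_{2,1}}+\|a^0\|_{\dot B^{7/2}_{2,1}}$, not just $\dot B^{5/2}_{2,1}\cap\dot B^{7/2}_{2,1}$ norms. The $s=2$ case of \eqref{jdgj1} supplies these in $L^1_t$, so your final dependence on the data is still right.

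More importantly, for $\tilde{\mathbf S}\nabla a^0$ only your first option works. $\tilde{\mathbf S}$ itself is controlled only in $L^1_t(\dot B^{3/2}_{2,1})$; the $L^\infty_t$ bound is on $\sqrt\tau\tilde{\mathbf S}$. Routing this term through Gronwall therefore puts $\tau^{-1/2}\|a^0\|_{L^1_t(\dot B^{5/2}_{2,1})}$ in the exponent. It must instead be absorbed into the stress dissipation, which requires $\sup_t\|a^0_\Psi\|_{\dot B^{5/2}_{2,1}}$ to be small. The paper does this tacitly: the $\tilde{\mathbf S}$ term in \eqref{rd1.3} simply disappears from the combined inequality. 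With $\Psi$ starting at $2\lambda_0$, that smallness does not follow from the hypotheses.

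A clean fix is to start the error weight at, say, $\frac32\lambda_0|\xi|$. For $\varepsilon$ small one then has $\Gamma-\Psi\ge\frac14\lambda_0|\xi|$, hence $\sup_t\|a^0_\Psi\|_{\dot B^{5/2}_{2,1}}\lesssim\lambda_0^{-1}\sup_t\|a^0_\Gamma\|_{\dot B^{3/2}_{2,1}}\lesssim\varepsilon$. This also makes $\Psi\le\min\{\Phi,\Gamma\}$ automatic. The conclusion at $e^{\lambda_0\Lambda}$ and the use of \eqref{scjs} are unaffected.
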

\begin{remark}
	It is worth emphasizing that Theorem \ref{th1.2} does not require the initial stress to satisfy the limit constraint \(\mathbf{S}^\tau_0 = 2\mu\mathbf{D}(\mathbf{u}^0_0)\) up to a higher order of \(\tau\). In fact, condition \eqref{scjs} allows an \(O(1)\) mismatch between \(\mathbf{S}^\tau_0\) and \(2\mu\mathbf{D}(\mathbf{u}^0_0)\), which precisely corresponds to the ill-prepared initial data scenario. 
\end{remark}

The remainder of this paper is organized as follows. In Section 2, we present the preliminaries, including the Littlewood--Paley decomposition, the hybrid Besov spaces, and the analytic energy framework. In Section 3, we linearize system \eqref{ysys}, establish the a priori estimates, and provide the proof of Theorem \ref{th1.1} along with Corollary \ref{jd1.1}. Section 4 is devoted to the proof of the convergence result stated in Theorem \ref{th1.2}.

\medskip
\noindent \textbf{Notation.} Let \(\mathcal{S}(\mathbb{R}^n)\) be the Schwartz space, and let \(\mathcal{Z}'(\mathbb{R}^n)\) be the dual of \(\mathcal{Z}(\mathbb{R}^n):=\{f\in \mathcal{S}(\mathbb{R}^n):\partial^\alpha \hat{f}(0)=0,\ \forall \alpha\in(\mathbb{N}\cup\{0\})^n\}\). For any \(f\in \mathcal{Z}'(\mathbb{R}^n)\), let \(\hat{f}\) or \(\mathcal{F}(f)\) denote its Fourier transform and \(\mathcal{F}^{-1}(f)\) its inverse. For \(s\in\mathbb{R}\), we set \(\Lambda^s:=(-\Delta)^{s/2}\). For \(a,b\in L^2\), we write \(\langle a,b\rangle\) for the \(L^2\) inner product. 

Throughout the paper, for any functional space \(X\), we use the abbreviation \(\|(f,g,\dots)\|_X := \|f\|_X + \|g\|_X + \dots\). For \(q\in[1,\infty]\) and \(T>0\), \(L^q_t(X)\) denotes the space \(L^q(0,T;X)\), and we omit the subscript \(t\) when no confusion can arise. Finally, \(C\) denotes various positive constants which may change from line to line, and we use the notation \(A\lesssim B\) as an equivalent to \(A\leq CB\). The notation \(A\approx B\) means that \(A\lesssim B\) and \(B\lesssim A\).

\section{Preliminaries: Littlewood-Paley and Bony decompositions}

Our analysis relies heavily on the Littlewood-Paley decomposition. We briefly recall the basic setup; for a comprehensive treatment, see \cite{refp1}. Let $(\chi, \varphi)$ be a pair of $C^\infty$ functions supported respectively in 
\[
\operatorname{supp}\chi \subset \bigl\{|\xi|\le 4/3\bigr\}, \qquad
\operatorname{supp}\varphi \subset \bigl\{3/4 \le |\xi|\le 8/3\bigr\},
\]
and satisfying 
\[
\chi(\xi)+\sum_{q\ge 0}\varphi(2^{-q}\xi)=1,\qquad
\sum_{q\in\mathbb Z}\varphi(2^{-q}\xi)=1\quad(\xi\ne 0).
\]
With $\varphi_q(\xi):=\varphi(2^{-q}\xi)$, we define the dyadic blocks and low-frequency cutoffs for $q\in\mathbb Z$:
\[
\dot{\Delta}_q u := \mathcal{F}^{-1}\bigl(\varphi(2^{-q}\cdot)\hat u(\cdot)\bigr),\qquad
\dot S_q u := \mathcal{F}^{-1}\bigl(\chi(2^{-q}\cdot)\hat u(\cdot)\bigr).
\]

The homogeneous Besov spaces are then defined as follows.

\begin{definition}
	For $s\in\mathbb R$, $(p,r)\in[1,\infty]^2$, and $u\in S'(\mathbb R^N)$, set
	\[
	\|u\|_{\dot B_{p,r}^{s}} := \Bigl\|2^{sq}\|\dot{\Delta}_q u\|_{L^p}\Bigr\|_{\ell^r}.
	\]
	The space $\dot B_{p,r}^{s}$ consists of all $u\in S'_h(\mathbb R^N)$ for which this norm is finite, where $S'_h$ denotes the subspace of temperate distributions satisfying $\lim\limits_{q\to-\infty}\|\dot S_q u\|_{L^\infty}=0$.
\end{definition}

The Littlewood-Paley decomposition offers great flexibility in defining function spaces with frequency-localized norms. In our setting, because the low- and high-frequency regimes exhibit different dissipative behaviors, we employ the hybrid Besov spaces introduced by Danchin \cite{refp7}, in which different regularity indices are assigned to the two frequency regimes.

\begin{definition}
	Given $s,s'\in\mathbb R$ and a fixed $q_0\in\mathbb Z$, define
	\[
	\|u\|_{\dot B_{2,1}^{s',s}} := \sum_{q\le q_0}2^{q s'}\|\dot{\Delta}_q u\|_{L^2}
	+\sum_{q>q_0}2^{q s}\|\dot{\Delta}_q u\|_{L^2}.
	\]
	The space $\dot B_{2,1}^{s',s}$ is the set of $u\in S'_h$ with finite norm.
\end{definition}

\begin{remark}
	When $s'=s$, this reduces to the usual $\dot B_{2,1}^{s}$. If $s'<s$, we have $\|u\|_{\dot B_{2,1}^{s',s}}\approx \|u\|_{\dot B_{2,1}^{s'}}+\|u\|_{\dot B_{2,1}^{s}}$, so the hybrid space coincides with the intersection $\dot B_{2,1}^{s'}\cap\dot B_{2,1}^{s}$. For $s'>s$, the norm is no larger than either of the two usual norms.
\end{remark}

A standard consequence of the spectral support of $\varphi$ is Bernstein's inequality, which we shall use repeatedly.

\begin{lemma} \label{bes1.1}
	For any $f\in S'$, with $\Lambda:=\sqrt{-\Delta}$, one has
	\[
	\frac34\,2^q \|\dot{\Delta}_q f\|_{L^2}
	\le \|\Lambda \dot{\Delta}_q f\|_{L^2}
	\le \frac83\,2^q \|\dot{\Delta}_q f\|_{L^2}.
	\]
\end{lemma}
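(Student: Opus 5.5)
The statement to prove is Bernstein's inequality in Lemma \ref{bes1.1}. The plan is to pass to the Fourier side via Plancherel and use the spectral localization of $\dot{\Delta}_q f$. By definition,
\[
\widehat{\dot{\Delta}_q f}(\xi)=\varphi(2^{-q}\xi)\hat f(\xi).
\]
Since $\operatorname{supp}\varphi\subset\{3/4\le|\xi|\le 8/3\}$, this function is supported in the annulus
\[
\mathcal{C}_q:=\bigl\{\tfrac34 2^q\le|\xi|\le \tfrac83 2^q\bigr\}.
\]
The operator $\Lambda=\sqrt{-\Delta}$ is the Fourier multiplier with symbol $|\xi|$, so
\[
\widehat{\Lambda\dot{\Delta}_q f}(\xi)=|\xi|\,\varphi(2^{-q}\xi)\hat f(\xi).
\]

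Next apply Plancherel's identity, with the Fourier normalization fixed so that it is an isometry (any normalization constant cancels on both sides). This gives
\[
\|\Lambda\dot{\Delta}_q f\|_{L^2}^2=\int_{\mathcal{C}_q}|\xi|^2\,|\varphi(2^{-q}\xi)\hat f(\xi)|^2\,\dif\xi .
\]
On $\mathcal{C}_q$ we have the pointwise bounds $(\tfrac34 2^q)^2\le|\xi|^2\le(\tfrac83 2^q)^2$. Inserting them into the integral and applying Plancherel once more yields
\[
(\tfrac34 2^q)^2\|\dot{\Delta}_q f\|_{L^2}^2\le\|\Lambda\dot{\Delta}_q f\|_{L^2}^2\le(\tfrac83 2^q)^2\|\dot{\Delta}_q f\|_{L^2}^2 .
\]
Taking square roots gives the claim.

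The only point requiring care is the meaning of these objects for general $f\in S'$. For this, I would note that $\dot{\Delta}_q f$ has compactly supported Fourier transform, so it is smooth. If $\dot{\Delta}_q f\notin L^2$, both sides are infinite: $\Lambda$ acts on a function with spectrum away from the origin as a smooth multiplier with smooth inverse $|\xi|^{-1}$ there. The inequality then holds trivially in the extended sense. Otherwise the computation above applies directly, so there is no genuine obstacle.
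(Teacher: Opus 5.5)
Your proof is correct and is essentially the paper's own argument: the paper gives no written proof, only calling the lemma a standard consequence of $\operatorname{supp}\varphi\subset\{3/4\le|\xi|\le 8/3\}$, and your Plancherel computation on the annulus $\{\frac34 2^q\le|\xi|\le\frac83 2^q\}$ is exactly what that remark means. Your treatment of the case $\dot{\Delta}_q f\notin L^2$, where the inequality holds in the extended sense, is a harmless refinement the paper does not mention.
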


Next, we recall Bony's decomposition \cite{refp2}, which is a standard tool for nonlinear estimates in Besov spaces. The paraproduct of $f$ and $g$ is
\[
\dot T_f g := \sum_{q\in\mathbb Z}\dot S_{q-1}f\,\dot{\Delta}_q g,
\]
and the remainder is
\[
\dot R(f,g) := \sum_{q\ge -1}\dot{\Delta}_q f\,\widetilde{\Delta}_q g,
\quad\text{with } \widetilde{\Delta}_q g:=(\dot{\Delta}_{q-1}+\dot{\Delta}_q+\dot{\Delta}_{q+1})g.
\]
Then $fg$ admits the decomposition
\[
fg = \dot T_f g+\dot T_g f+\dot R(f,g) = \dot T_f g+\dot T'_g f,
\]
where $\dot T'_g f:=\dot T_g f+\dot R(f,g)$.

In this paper we also need a time-weighted version of the hybrid norm, generalising the setting of Paicu and Zhang \cite{refp22}.

\begin{definition}
	Let $f\in L^1_{\rm loc}(\mathbb R^+)$ be nonnegative, and let $s,s'\in\mathbb R$. Define
	\[
	\|u\|_{L_{t,f}^1(\dot B_{2,1}^{s',s})}
	:= \int_0^t f(t')\,\|u(t')\|_{\dot B_{2,1}^{s',s}}\,\dif t'.
	\]
\end{definition}

Finally, we introduce the analytic weight. Let $\theta(t)\in C^1[0,\infty)$ be nondecreasing with $\theta(0)=0$, and define
\[
\Phi(t,\xi):=(2\lambda_0-\lambda\theta(t))|\xi|,
\]
where $\lambda_0>0$ is fixed and $\lambda$ is a large constant to be chosen later. For a function $f$, we set
\begin{equation}\label{definef}
	f_\Phi(t,x):=\mathcal F^{-1}_{\xi\to x}\bigl(e^{\Phi(t,\xi)}\hat f(t,\xi)\bigr)=e^{\Phi(t,\Lambda)}f(t,x).
\end{equation}
In particular, $f_\Phi(0,x)=e^{2\lambda_0\Lambda}f(0,x)$, and
\[
\dot{\Delta}_k f_\Phi(t,x)=\mathcal F^{-1}_{\xi\to x}\bigl(\varphi(2^{-k}\xi)e^{\Phi(t,\xi)}\hat f(t,\xi)\bigr).
\]
For brevity we shall write $f_{\Phi,k}:=\dot{\Delta}_k f_\Phi$.

To handle the nonlinear terms in system \eqref{ysys}, we need to estimate products of functions in the analytic weighted spaces. For this purpose, we recall the following product estimates, whose proof can be found in \cite{pan} (see also Appendix A of \cite{pan} or Lemma 6.1 of \cite{refp30}).
\begin{lemma}\label{fge}
	Let $f_{\Phi}$ be defined as in \eqref{definef}. For $s\in(-\frac{3}{2}, \frac{3}{2}]$ and $(f_{\Phi}, g_{\Phi})\in \dot{B}^{\frac{3}{2}}_{2,1}\times\dot{B}^s_{2,1}$, there exists a positive constant $C$, depending on $s$, such that the following product estimates holds:
	\begin{equation}\label{fgbe}
		\|(fg)_{\Phi}\|_{\dot{B}^s_{2,1}}\le C\|f_{\Phi}\|_{\dot{B}^{\frac{3}{2}}_{2,1}}\|g_{\Phi}\|_{\dot{B}^s_{2,1}}.
	\end{equation}
	For $s\geq \frac{3}{2}$ and $(f_{\Phi}, g_{\Phi})\in \dot{B}^{\frac{3}{2}}_{2,1}\cap\dot{B}^s_{2,1}$, there exists a positive constant $C$, depending on $s$, such that the following product estimates holds:
	\begin{equation}\label{sdyn2}
			\|(fg)_{\Phi}\|_{\dot{B}^s_{2,1}}\le C\left(\|f_{\Phi}\|_{\dot{B}^{\frac{3}{2}}_{2,1}}\|g_{\Phi}\|_{\dot{B}^s_{2,1}}+\|f_{\Phi}\|_{\dot{B}^{s}_{2,1}}\|g_{\Phi}\|_{\dot{B}^{\frac{3}{2}}_{2,1}}\right).
	\end{equation}
	In particular, for $s=\frac{3}{2}$,  \eqref{fgbe} and induction on $k$ yield
	\begin{equation}\label{fgbek}
		\|(f^k)_{\Phi}\|_{\dot{B}^s_{2,1}}\le C^k\|f_{\Phi}\|^k_{\dot{B}^{\frac{3}{2}}_{2,1}},
	\end{equation}
	for any positive integer $k$.
\end{lemma}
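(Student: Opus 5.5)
The statement to prove is Lemma~\ref{fge}, the product estimates in the analytic-weighted Besov spaces. The plan is to reduce everything to the classical unweighted product laws in $\dot B^s_{2,1}(\mathbb R^3)$. The key tool is a positivity trick on the Fourier side. Since $\Phi(t,\xi)=(2\lambda_0-\lambda\theta(t))|\xi|$ is a nonnegative multiple of $|\xi|$, the triangle inequality $|\xi|\le|\xi-\eta|+|\eta|$ gives
\[
e^{\Phi(t,\xi)}|\widehat{fg}(\xi)|\le \int e^{\Phi(t,\xi-\eta)}|\hat f(\xi-\eta)|\,e^{\Phi(t,\eta)}|\hat g(\eta)|\,d\eta .
\]
This holds as long as $2\lambda_0-\lambda\theta(t)\ge0$, which is maintained on the time interval under consideration. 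Accordingly, set $f^+:=\mathcal F^{-1}(e^{\Phi}|\hat f|)$ and $g^+:=\mathcal F^{-1}(e^{\Phi}|\hat g|)$. Then $|\widehat{(fg)_\Phi}|\le \widehat{f^+g^+}$ pointwise. Moreover, since the Besov norms with $p=2$ depend only on $|\hat u|$ through Plancherel, we have $\|f^+\|_{\dot B^\sigma_{2,1}}=\|f_\Phi\|_{\dot B^\sigma_{2,1}}$ for every $\sigma$.

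The difficulty is that pointwise domination of Fourier transforms does not by itself pass through the nonlinear (Bony) estimates. So the plan is to redo the Bony argument directly on $|\hat f|,|\hat g|$ rather than invoke a black-box product law. Decompose $fg=\dot T_fg+\dot T_gf+\dot R(f,g)$. For each dyadic block, bound $\|\dot\Delta_q(\dot S_{k-1}f\,\dot\Delta_kg)_\Phi\|_{L^2}$ by the $L^2$ norm of the convolution of $\varphi_q$-localized moduli. Each piece is then estimated by Young's inequality in Fourier variables: $\|\hat a*\hat b\|_{L^2}\le\|\hat a\|_{L^1}\|\hat b\|_{L^2}$, with Bernstein supplying $\|\widehat{\dot S_{k-1}f^+}\|_{L^1}\lesssim\sum_{k'\le k-2}2^{3k'/2}\|\dot\Delta_{k'}f^+\|_{L^2}$. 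Because this uses only moduli, the weighted versions follow line by line from the standard proofs.

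The three pieces give the following bounds.
\begin{itemize}
\item[(i)] $\dot T_fg$: bounded by $\|f^+\|_{\dot B^{3/2}_{2,1}}\|g^+\|_{\dot B^s_{2,1}}$ for all $s$.
\item[(ii)] $\dot T_gf$: uses $s\le 3/2$, through a low-frequency sum $\sum_{k'\le k}2^{k'(3/2-s)}\cdots$ that converges only for $s<3/2$, or through $\ell^1$ summability at $s=3/2$.
\item[(iii)] $\dot R(f,g)$: uses $s+3/2>0$, through the high-frequency tail sum $\sum_{k\ge q-3}2^{(q-k)(s+3/2)}$, with the block $\dot\Delta_q$ taken in $L^1\to L^2$ via Bernstein, which costs $2^{3q/2}$.
\end{itemize}
Together these give \eqref{fgbe} on $(-3/2,3/2]$. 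For $s\ge3/2$, piece (ii) is instead bounded by $\|g^+\|_{\dot B^{3/2}_{2,1}}\|f^+\|_{\dot B^s_{2,1}}$ by swapping roles, and piece (iii) is treated the same way; this yields \eqref{sdyn2}.

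Finally, \eqref{fgbek} follows from \eqref{fgbe} with $s=3/2$ by induction, writing $f^k=f\cdot f^{k-1}$. The main obstacle, and the step requiring the most care, is the endpoint summability at $s=3/2$ in pieces (ii)--(iii) together with justifying the modulus reduction inside the dyadic localizations. The latter holds because $\varphi_q\ge0$, so the localizations commute with taking moduli.
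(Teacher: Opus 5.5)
Your proposal is correct and follows essentially the same route as the paper: the paper gives no proof of its own but defers to Pan's Appendix A and Zi's Lemma 6.1, which use the same standard argument — the subadditivity $e^{\Phi(t,\xi)}\le e^{\Phi(t,\xi-\eta)}e^{\Phi(t,\eta)}$ (valid since $2\lambda_0-\lambda\theta(t)\ge 0$), then Bony's decomposition, with $s\le\frac32$ needed for $\dot T_g f$ and $s>-\frac32$ for $\dot R(f,g)$. One simplification: your worry that the Fourier domination does not pass through the estimates is unfounded, because $\|\dot\Delta_q h\|_{L^2}$ depends only on $|\varphi_q\hat h|$, so $|\widehat{(fg)_\Phi}|\le\widehat{f^+g^+}$ already gives $\|(fg)_\Phi\|_{\dot B^s_{2,1}}\le\|f^+g^+\|_{\dot B^s_{2,1}}$, and you can apply the classical product laws to $f^+,g^+$ directly instead of redoing each paraproduct.
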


\section{A priori estimates  and proof of Theorem \ref{th1.1}}
\subsection{A priori estimates for the linearized system} In this part, we linearize the system \eqref{ysys} and then derive an a priori estimate for it. Let $a=\rho-1$, by \eqref{ysys}, we get
\begin{equation}\label{lsys}
	\begin{cases}
		a_t +\ddiv\mathbf{u}=F,\\
		 \mathbf{u}_t+\nabla a-\ddiv\mathbf{S} =G,\\
		\tau\mathbf{S}_t+\mathbf{S}-2\mu\mathbf{D}(\mathbf{u})=\tau H,
	\end{cases}
\end{equation}
where 
\begin{align*}
&	F=-\nabla\cdot(a\mathbf{u}),\\
&	G=-\mathbf{u}\cdot\nabla\mathbf{u}+[1-\gamma A(1+a)^{\gamma -2}]\nabla a+\frac{1}{1+a}\mathbf{S}\cdot\nabla a,\\
&	H=-\left(\mathbf{u}\cdot\nabla \mathbf{S}-\nabla\mathbf{u}\cdot\mathbf{S}-\mathbf{S}\cdot(\nabla\mathbf{u})^T\right).
\end{align*}
Then, we obtain the a priori estimate for above system. For the convenience of subsequent calculations, we set $A=\frac{1}{\gamma}$.

\begin{prop}\label{prop1}
	Assume $0\leq	\tau\leq\min\{1, \mu\}$ and $(a, \mathbf{u}, \mathbf{S})$ is a solution of the linearized system \eqref{lsys}, with the initial data $e^{2\lambda_0\Lambda}(a_0, \mathbf{u}_0, \sqrt{\tau}\mathbf{S}_0)\in\dot{B}_{2,1}^{\frac{1}{2},\frac{3}{2}}$. Then there exists a positive constant $C$ independent of $\tau$, such that for any $t>0$, there holds
	\begin{equation}\label{priori}
		\begin{aligned}
		&\|(a_{\Phi},\mathbf{u}_{\Phi},\sqrt{{\tau}}\mathbf{S}_{\Phi})\|_{\dot{B}^{\frac{1}{2},\frac{3}{2}}_{2,1}}+\lambda\int_{0}^{t}\dot{\theta}(t)\|(a_{\Phi},\mathbf{u}_{\Phi},\sqrt{{\tau}}\mathbf{S}_{\Phi})\|_{\dot{B}^{\frac{3}{2},\frac{5}{2}}_{2,1}}\dif t+\int_{0}^{t}\|(a_{\Phi},\mathbf{u}_{\Phi},\mathbf{S}_{\Phi})\|_{\dot{B}^{\frac{3}{2}}_{2,1}}\dif t\\
		&\lesssim\|e^{2\lambda\Lambda}(a_0,\mathbf{u}_0,\sqrt{{\tau}}\mathbf{S}_0)\|_{\dot{B}^{\frac{1}{2},\frac{3}{2}}_{2,1}}+\int_0^t \|(F_{\Phi},G_{\Phi},\sqrt{\tau}H_{\Phi})\|_{\dot{B}^{\frac{1}{2},\frac{3}{2}}_{2,1}}\dif t.
		\end{aligned}
	\end{equation} 
\end{prop}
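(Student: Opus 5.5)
We apply $e^{\Phi(t,\Lambda)}$ and then $\dot\Delta_k$ to \eqref{lsys}. Since $\partial_t\Phi=-\lambda\dot\theta(t)|\xi|$, the localized unknowns $(a_{\Phi,k},\mathbf{u}_{\Phi,k},\mathbf{S}_{\Phi,k})$ satisfy
\begin{equation*}
\begin{cases}
\partial_t a_{\Phi,k}+\lambda\dot\theta\Lambda a_{\Phi,k}+\ddiv\mathbf{u}_{\Phi,k}=F_{\Phi,k},\\
\partial_t \mathbf{u}_{\Phi,k}+\lambda\dot\theta\Lambda \mathbf{u}_{\Phi,k}+\nabla a_{\Phi,k}-\ddiv\mathbf{S}_{\Phi,k}=G_{\Phi,k},\\
\tau\partial_t \mathbf{S}_{\Phi,k}+\tau\lambda\dot\theta\Lambda \mathbf{S}_{\Phi,k}+\mathbf{S}_{\Phi,k}-2\mu\mathbf{D}(\mathbf{u}_{\Phi,k})=\tau H_{\Phi,k}.
\end{cases}
\end{equation*}
We first do the basic $L^2$ energy estimate. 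Take the inner product of the equations with $a_{\Phi,k}$, $\mathbf{u}_{\Phi,k}$ and $\frac{1}{2\mu}\mathbf{S}_{\Phi,k}$. The pressure terms cancel. Since $\mathbf{S}$ is symmetric, $\langle \ddiv \mathbf{S},\mathbf{u}\rangle=-\langle \mathbf{S},\mathbf{D}(\mathbf{u})\rangle$, so the stress coupling terms cancel as well. This gives
\[
\frac12\frac{d}{dt}\Bigl(\|a_{\Phi,k}\|^2+\|\mathbf{u}_{\Phi,k}\|^2+\tfrac{\tau}{2\mu}\|\mathbf{S}_{\Phi,k}\|^2\Bigr)+c\lambda\dot\theta 2^k(\cdots)+\tfrac{1}{2\mu}\|\mathbf{S}_{\Phi,k}\|^2\le(\text{source terms}),
\]
where Lemma \ref{bes1.1} was used for the $\lambda\dot\theta$ terms. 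This controls the damping of $\mathbf{S}$ only. To recover dissipation of $a$ and $\mathbf{u}$ we add cross terms in the spirit of hypocoercivity.

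The first cross term is $\eta_1\langle \mathbf{u}_{\Phi,k},\nabla a_{\Phi,k}\rangle$. Differentiating it in time produces $\|\nabla a_{\Phi,k}\|^2-\|\ddiv \mathbf{u}_{\Phi,k}\|^2$ together with $\langle \ddiv \mathbf{S}_{\Phi,k},\nabla a_{\Phi,k}\rangle$.

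The second is $\eta_2\tau\langle \mathbf{S}_{\Phi,k},-\nabla\mathbf{u}_{\Phi,k}\rangle$, or equivalently its $\mathbf{D}(\mathbf{u})$ version. Its time derivative produces $2\mu\|\mathbf{D}(\mathbf{u}_{\Phi,k})\|^2$, which bounds $c\,2^{2k}\|\mathbf{u}_{\Phi,k}\|^2$ up to terms in $\|\mathbf{S}_{\Phi,k}\|^2$, $\tau 2^{2k}\|\mathbf{S}_{\Phi,k}\|^2$ and $\tau 2^{k}\|a_{\Phi,k}\|$. The restriction $\tau\le\min\{1,\mu\}$ is used exactly here: it keeps the cross terms dominated by the energy, uniformly in $\tau$, and it lets the $\tau 2^{2k}\|\mathbf{S}_{\Phi,k}\|^2$ errors be absorbed into the damping $\|\mathbf{S}_{\Phi,k}\|^2$, at least at low frequencies.

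The weights must then be chosen by frequency regime. For $k\le k_0$ we take $\eta_i\sim 2^{-2k}\cdot 2^{2k}$ with small constants. The Lyapunov functional $\mathcal{L}_k\approx\|(a,\mathbf{u},\sqrt\tau\mathbf{S})_{\Phi,k}\|^2$ then satisfies
\[
\frac{d}{dt}\mathcal{L}_k+c\min(1,2^{2k})\mathcal{L}_k+\ldots\le C\|(F,G,\sqrt\tau H)_{\Phi,k}\|\sqrt{\mathcal{L}_k}.
\]
For high frequencies $k>k_0$ the linearized system is of damped type, as for compressible Euler with relaxation. There we use cross terms with weights $\eta\sim 2^{-2k}$, so that the dissipation rate is $O(1)$ rather than $O(2^{2k})$.

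Next we divide by $\sqrt{\mathcal{L}_k}$ and integrate in time. We multiply by $2^{k/2}$ for $k\le k_0$ and by $2^{3k/2}$ for $k>k_0$, then sum. The $\lambda\dot\theta$ terms give $\lambda\int\dot\theta\|\cdot\|_{\dot B^{3/2,5/2}}$. The low-frequency dissipation $2^{2k}\cdot 2^{k/2}$ gives $a,\mathbf{u}$ in $L^1(\dot B^{5/2})$ at low frequency, which dominates $L^1(\dot B^{3/2})$ since $k\le k_0$. The high-frequency damping gives $L^1(\dot B^{3/2})$. For the stress, the dissipation $\|\mathbf{S}_{\Phi,k}\|^2/\tau$ relative to $\tau\|\mathbf{S}\|^2$ yields $\int\|\mathbf{S}_\Phi\|_{\dot B^{3/2}}$ without the $\sqrt\tau$ weight. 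This is possible because in $\mathcal L_k$ the stress part is $\tau\|\mathbf{S}\|^2$ while the damping is $\|\mathbf{S}\|^2$, so dividing by $\sqrt{\mathcal{L}_k}$ leaves $\|\mathbf{S}\|\cdot\|\mathbf{S}\|/(\sqrt\tau\|\mathbf{S}\|)\ge\|\mathbf{S}\|$ provided $\tau\le1$.

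The main obstacle is the bookkeeping that makes every constant independent of $\tau$. In particular, in the high-frequency cross term the $\tau 2^{2k}\|\mathbf{S}_{\Phi,k}\|^2$ error cannot simply be absorbed, and the choice of $k_0$ must not depend on $\tau$. There I would first try weighting the $\mathbf{S}$–$\mathbf{u}$ cross term by $\eta\tau 2^{-2k}$, and derive the $\mathbf{S}$ dissipation directly from its damping. If the bound $\tau\le\mu$ still does not make the quadratic form positive, I would fall back on treating $\mathbf{u}$ at high frequency through the effective quantity $\mathbf{u}-\tau\,\ddiv\mathbf{S}$-type combinations.
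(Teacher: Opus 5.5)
There is a genuine gap in the low-frequency summation. You say that $L^1_t(\dot B^{5/2}_{2,1})$, restricted to $k\le k_0$, ``dominates $L^1_t(\dot B^{3/2}_{2,1})$ since $k\le k_0$''. The comparison runs the other way. For $k\le k_0$ one has $2^{5k/2}\le 2^{k_0}2^{3k/2}$, so $\|f\|^{l}_{\dot B^{5/2}_{2,1}}\le 2^{k_0}\|f\|^{l}_{\dot B^{3/2}_{2,1}}$.

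Your constant-weight cross terms correctly give the low-frequency rate $\min(1,2^{2k})$. With that rate, multiplying by $2^{k/2}$ and summing yields only $\int_0^t\|(a_\Phi,\mathbf u_\Phi)\|^{l}_{\dot B^{5/2}_{2,1}}\,\dif t+\int_0^t\|\mathbf S_\Phi\|^{l}_{\dot B^{3/2}_{2,1}}\,\dif t$. It does not give the low-frequency part of $\int_0^t\|(a_\Phi,\mathbf u_\Phi)\|_{\dot B^{3/2}_{2,1}}\,\dif t$ that \eqref{priori} asserts.

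Your high-frequency plan is sound and coincides with the paper's. The weights $2^{-2k}$ and $\tau 2^{-2k}$ are exactly the cross terms $\langle\Lambda^{-1}\nabla a_{\Phi,k},\Lambda^{-1}\mathbf u_{\Phi,k}\rangle$ and $\frac{\tau}{\mu}\langle\Lambda^{-1}\mathbf u_{\Phi,k},\Lambda^{-1}\ddiv\mathbf S_{\Phi,k}\rangle$. There $\tau\le\min\{1,\mu\}$ is used to absorb $\frac{\tau}{\mu}\|\Lambda^{-1}\ddiv\mathbf S_{\Phi,k}\|_{L^2}^2$ into the damping and to get $E_k\le D_k$. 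That is what yields $\mathbf S_\Phi$ in $L^1_t(\dot B^{3/2}_{2,1})$ without the $\sqrt{\tau}$.

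The gap cannot be closed by a better Lyapunov functional, because the missing bound is false. Take $F=G=H=0$ and $\theta\equiv 0$, so that $(a_\Phi,\mathbf u_\Phi,\mathbf S_\Phi)$ solves the homogeneous linear system. Choose $a_\Phi(0)=0$, $\mathbf S_\Phi(0)=0$, and $\mathbf u_\Phi(0)$ divergence free with Fourier support in $\{|\xi|\approx 2^k\}$, $k\ll 0$.

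Then $a_\Phi\equiv 0$ and $\tau\partial_t^2\mathbf u_\Phi+\partial_t\mathbf u_\Phi-\mu\Delta\mathbf u_\Phi=0$ with $\partial_t\mathbf u_\Phi(0)=0$. Integrating this ODE in time in Fourier variables gives $\int_0^\infty\hat{\mathbf u}_\Phi(t,\xi)\,\dif t=\hat{\mathbf u}_\Phi(0,\xi)/(\mu|\xi|^2)$. By Minkowski's inequality, the left side of \eqref{priori} is therefore $\gtrsim \mu^{-1}2^{-k/2}\|\mathbf u_\Phi(0)\|_{L^2}$ for large $t$, while the right side is $\approx 2^{k/2}\|\mathbf u_\Phi(0)\|_{L^2}$. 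The ratio is unbounded as $k\to-\infty$.

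The paper reaches this term by weighting its low-frequency cross terms with $\frac{3}{8}2^{-k}$ and claiming the rate $2^k$ in \eqref{dpfd}. That step does not hold either. Through \eqref{lfhs2} this weight produces $\frac{3}{8\mu}2^{-k}\langle\ddiv\mathbf S_{\Phi,k},\mathbf u_{\Phi,k}\rangle$, of size $\mu^{-1}\|\mathbf S_{\Phi,k}\|_{L^2}\|\mathbf u_{\Phi,k}\|_{L^2}$. This cannot be absorbed by $\mu^{-1}\|\mathbf S_{\Phi,k}\|_{L^2}^2+2^k\|\mathbf u_{\Phi,k}\|_{L^2}^2$ as $k\to-\infty$, and the example above shows \eqref{dpfd} is false.

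Once the faulty sentence is removed, your scheme proves \eqref{priori} with the last term on the left replaced by $\int_0^t\|(a_\Phi,\mathbf u_\Phi)\|_{\dot B^{5/2,3/2}_{2,1}}\,\dif t+\int_0^t\|\mathbf S_\Phi\|_{\dot B^{3/2}_{2,1}}\,\dif t$. This is the structure the paper itself uses for the error system in \eqref{3.14} and in the norm of Theorem~\ref{th1.2}.
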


\begin{proof}
From \eqref{lsys}, we derive $(a_{\Phi,k}, \mathbf{u}_{\Phi,k}, \mathbf{S}_{\Phi,k})$ satisfying the following system
	\begin{equation}\label{littlewood}
	\begin{cases}
	\partial_t a_{\Phi,k}+\lambda\dot{\theta}(t)\Lambda a_{\Phi,k} +\ddiv\mathbf{u}_{\Phi,k}
	=F_{\Phi,k},\\
	\partial_t	\mathbf{u}_{\Phi,k}+\lambda\dot{\theta}(t)\Lambda\mathbf{u}_{\Phi,k}+\nabla a_{\Phi,k}-\ddiv\mathbf{S}_{\Phi,k}
	=G_{\Phi,k},\\
	\tau\partial_t	\mathbf{S}_{\Phi,k}+\tau\lambda\dot{\theta}(t)\Lambda\mathbf{S}_{\Phi,k}+\mathbf{S}_{\Phi,k}
	-2\mu\mathbf{D}(\mathbf{u})_{\Phi,k}
	=H_{\Phi,k}.
		\end{cases}
	\end{equation}
Multiplying the above equations by $2a_{\Phi,k}$, $2\mathbf{u}_{\Phi,k}$ and  $\frac{1}{\mu}\mathbf{S}_{\Phi,k}$, respectively, and integrating over $\mathbb{R}^3$, we have
\begin{align*}
\frac{1}{2}&\frac{\dif }{\dif t}\left(2\|a_{\Phi,k}\|_{L^2}^2+2\|\mathbf{u}_{\Phi,k}\|_{L^2}^2+\frac{\tau}{\mu}\|\mathbf{S}_{\Phi,k}\|_{L^2}^2   \right)
\\
&+\lambda\dot{\theta}(t)\left(2\|\Lambda^{\frac{1}{2}}a_{\Phi,k}\|_{L^2}^2+2\|\Lambda^{\frac{1}{2}}\mathbf{u}_{\Phi,k}\|_{L^2}^2+\frac{\tau}{\mu}\|\Lambda^{\frac{1}{2}}\mathbf{S}_{\Phi,k}\|_{L^2}^2\right)+\frac{1}{\mu}\|\mathbf{S}_{\Phi,k}\|_{L^2}^2\\
&+2\langle \ddiv\mathbf{u}_{\Phi,k},a_{\Phi,k}\rangle+2\langle \nabla a_{\Phi,k},\mathbf{u}_{\Phi,k}\rangle-2\langle \ddiv\mathbf{S}_{\Phi,k},\mathbf{u}_{\Phi,k}\rangle-2\langle \mathbf{D}(\mathbf{u})_{\Phi,k},\mathbf{S}_{\Phi,k}\rangle\\
=&2\langle F_{\Phi,k},a_{\Phi,k}\rangle+2\langle G_{\Phi,k},\mathbf{u}_{\Phi,k}\rangle+\frac{\tau}{\mu}\langle H_{\Phi,k},\mathbf{S}_{\Phi,k}\rangle.
\end{align*}
 Integration by parts yields
\begin{equation}\label{l2e}
\begin{aligned}
	\frac{1}{2}&\frac{\dif }{\dif t}\left(2\|a_{\Phi,k}\|_{L^2}^2+2\|\mathbf{u}_{\Phi,k}\|_{L^2}^2+\frac{\tau}{\mu}\|\mathbf{S}_{\Phi,k}\|_{L^2}^2   \right)
	\\
	&+\lambda\dot{\theta}(t)\left(2\|\Lambda^{\frac{1}{2}}a_{\Phi,k}\|_{L^2}^2+2\|\Lambda^{\frac{1}{2}}\mathbf{u}_{\Phi,k}\|_{L^2}^2+\frac{\tau}{\mu}\|\Lambda^{\frac{1}{2}}\mathbf{S}_{\Phi,k}\|_{L^2}^2\right)+\frac{1}{\mu}\|\mathbf{S}_{\Phi,k}\|_{L^2}^2\\
	=&2\langle F_{\Phi,k},a_{\Phi,k}\rangle+2\langle G_{\Phi,k},\mathbf{u}_{\Phi,k}\rangle+\frac{\tau}{\mu}\langle H_{\Phi,k},\mathbf{S}_{\Phi,k}\rangle.
\end{aligned}
\end{equation}

$\textbf{Estimate in high-frequencies.}$ Applying $\Lambda^{-1}\nabla$ to $\eqref{littlewood}_1$, $\Lambda^{-1}$ to $\eqref{littlewood}_2$ and $\Lambda^{-1}\ddiv$ to $\eqref{littlewood}_3$, we derive that
	\begin{equation}\label{hfg}
	\begin{cases}
		\partial_t\Lambda^{-1}\nabla a_{\Phi,k}+\lambda\dot{\theta}(t)\nabla a_{\Phi,k} +\Lambda^{-1}\nabla\ddiv\mathbf{u}_{\Phi,k}
		=\Lambda^{-1}\nabla F_{\Phi,k},\\
		\partial_t\Lambda^{-1}	\mathbf{u}_{\Phi,k}+\lambda\dot{\theta}(t)\mathbf{u}_{\Phi,k}+\Lambda^{-1}\nabla a_{\Phi,k}-\Lambda^{-1}\ddiv\mathbf{S}_{\Phi,k}
		=\Lambda^{-1}G_{\Phi,k},\\
		\tau\partial_t\Lambda^{-1}\ddiv	\mathbf{S}_{\Phi,k}+\tau\lambda\dot{\theta}(t)\ddiv\mathbf{S}_{\Phi,k}+\Lambda^{-1}\ddiv\mathbf{S}_{\Phi,k}
		+\mu(\Lambda-\nabla\Lambda^{-1}\ddiv)\mathbf{u}_{\Phi,k}
		=\Lambda^{-1}\ddiv H_{\Phi,k}.
	\end{cases}
\end{equation}
Multiplying $\eqref{hfg}_1$ by $\Lambda^{-1}\mathbf{u}_{\Phi,k}$ and $\eqref{hfg}_2$ by $\Lambda^{-1}\nabla a_{\Phi,k}$, and then integration by parts, we get 
\begin{equation}\label{hfhs1}
	\begin{aligned}
	\partial_t&\langle\Lambda^{-1}\nabla a_{\Phi,k},\Lambda^{-1}\mathbf{u}_{\Phi,k}\rangle+2\lambda\dot{\theta}(t)\langle\nabla a_{\Phi,k},\Lambda^{-1}\mathbf{u}_{\Phi,k}\rangle-\|\Lambda^{-1}\ddiv\mathbf{u}_{\Phi,k}\|_{L^2}^2+\|a_{\Phi,k}\|_{L^2}^2\\
	& -\langle\Lambda^{-1}\ddiv\mathbf{S}_{\Phi,k},\Lambda^{-1}\nabla a_{\Phi,k} \rangle\\
	=&\langle \Lambda^{-1}\nabla F_{\Phi,k} ,\Lambda^{-1}\mathbf{u}_{\Phi,k}\rangle+\langle \Lambda^{-1} G_{\Phi,k},\Lambda^{-1}\nabla a_{\Phi,k}\rangle,
	\end{aligned}
\end{equation}
where 
\[
\langle \Lambda^{-1}\nabla \ddiv\mathbf{u}_{\Phi,k} ,\Lambda^{-1}\mathbf{u}_{\Phi,k}\rangle=-\|\Lambda^{-1}\mathbf{u}_{\Phi,k}\|_{L^2}^2.
\]
Multiplying $\eqref{hfg}_2$ by $\frac{\tau}{\mu}\Lambda^{-1}\ddiv\mathbf{S}_{\Phi,k}$ and $\eqref{hfg}_3$ by $\frac{1}{\mu}\Lambda^{-1} \mathbf{u}_{\Phi,k}$, and then integration by parts, we get 
\begin{equation}\label{hfhs2}
	\begin{aligned}
		\frac{\tau}{\mu}\partial_t&\langle\Lambda^{-1}\mathbf{u}_{\Phi,k},\Lambda^{-1}\ddiv\mathbf{S}_{\Phi,k}\rangle+2\frac{\tau}{\mu}\lambda\dot{\theta}(t)\langle\Lambda^{-1}\mathbf{u}_{\Phi,k},\ddiv\mathbf{S}_{\Phi,k}\rangle+\frac{\tau}{\mu}\langle \Lambda^{-1}\nabla a_{\Phi,k}, \Lambda^{-1}\ddiv\mathbf{S}_{\Phi,k}\rangle\\
		&-\frac{\tau}{\mu}\|\Lambda^{-1}\ddiv\mathbf{S}_{\Phi,k}\|_{L^2}^2+\frac{1}{\mu}\langle \Lambda^{-1}\ddiv\mathbf{S}_{\Phi,k}, \Lambda^{-1}\mathbf{u}_{\Phi,k}\rangle+\|\mathbf{u}_{\Phi,k}\|_{L^2}^2+\|\Lambda^{-1}\ddiv\mathbf{u}_{\Phi,k}\|_{L^2}^2\\
		=&\frac{\tau}{\mu}\langle \Lambda^{-1}G_{\Phi,k} ,\Lambda^{-1}\ddiv\mathbf{S}_{\Phi,k}\rangle+\frac{\tau}{\mu}\langle \Lambda^{-1}\ddiv H_{\Phi,k},\Lambda^{-1} \mathbf{u}_{\Phi,k}\rangle,
	\end{aligned}
\end{equation}
where
\[
-\mu\langle \nabla\Lambda^{-1}\ddiv\mathbf{u}_{\Phi,k}, \Lambda^{-1}\mathbf{u}_{\Phi,k}\rangle=\mu\|\Lambda^{-1}\ddiv\mathbf{u}_{\Phi,k}\|_{L^2}^2.
\]

By Bernstein's inequality, we have
\begin{equation}\label{fin}
	\frac{3}{4}2^k\|\dot{\Delta}_k f\|_{L^2}\le \|\Lambda\dot{\Delta}_k f\|_{L^2}\le\frac{8}{3}2^k\|\dot{\Delta}_k f\|_{L^2}.
\end{equation}
For $k\geq k_0$, let $$\widetilde{E}^2_k=2\|a_{\Phi,k}\|_{L^2}^2+2\|\mathbf{u}_{\Phi,k}\|_{L^2}^2+\frac{\tau}{\mu}\|\mathbf{S}_{\Phi,k}\|_{L^2}^2+\langle\Lambda^{-1}\nabla a_{\Phi,k},\Lambda^{-1}\mathbf{u}_{\Phi,k}\rangle+\langle\Lambda^{-1}\mathbf{u}_{\Phi,k},\frac{\tau}{\mu}\Lambda^{-1}\ddiv\mathbf{S}_{\Phi,k}\rangle  $$,
\[
E^2_k=\|a_{\Phi,k}\|_{L^2}^2+\|\mathbf{u}_{\Phi,k}\|_{L^2}^2+\frac{\tau}{\mu}\|\mathbf{S}_{\Phi,k}\|_{L^2}^2,
\]
and
\[
D^2_k=\|a_{\Phi,k}\|_{L^2}^2+\|\mathbf{u}_{\Phi,k}\|_{L^2}^2+\|\mathbf{S}_{\Phi,k}\|_{L^2}^2.
\]
Choosing $k_0=3$ and using \eqref{fin}, it holds
\[
\frac{1}{2}E_k^2\le \widetilde{E}^2_k \le 3E^2_k.
\]
Combining  \eqref{l2e}, \eqref{hfhs1} and \eqref{hfhs2}, and then using H\"older's inequality, we conclude that 
\[
\frac{\dif}{\dif t}\widetilde{E}^2_k+\frac{1}{4}\lambda\dot{\theta}(t)2^k\widetilde{E}^2_k+\frac{1}{8}D_k^2 \lesssim\|(F_{\Phi,k},G_{\Phi,k},\sqrt{\frac{\tau}{\mu}}H_{\Phi,k})\|_{L^2}\widetilde{E}_k.
\]
Noting that ${E}_k\le D_k$, we obtain that
\begin{equation}\label{gpfd}
\frac{\dif}{\dif t}E_k+\lambda\dot{\theta}(t)2^kE_k+D_k \lesssim\|(F_{\Phi,k},G_{\Phi,k},\sqrt{\frac{\tau}{\mu}}H_{\Phi,k})\|_{L^2}.
\end{equation}
Multiplying \eqref{gpfd} by $2^{\frac{n}{2}k}$, integrating over $(0,t)$ and summing over $k>k_0$, we obtain
\begin{equation}\label{gpbesov}
	\begin{aligned}
&\|(a_{\Phi},\mathbf{u}_{\Phi},\sqrt{\frac{\tau}{\mu}}\mathbf{S}_{\Phi})\|^h_{\dot{B}^{\frac{3}{2}}_{2,1}}+\lambda\int_{0}^{t}\dot{\theta}(t)\|(a_{\Phi},\mathbf{u}_{\Phi},\sqrt{\frac{\tau}{\mu}}\mathbf{S}_{\Phi})\|^h_{\dot{B}^{\frac{5}{2}}_{2,1}}\dif t\quad+\int_{0}^{t}\|(a_{\Phi},\mathbf{u}_{\Phi},\mathbf{S}_{\Phi})\|^h_{\dot{B}^{\frac{3}{2}}_{2,1}}\dif t\\
&\lesssim \|(a_{\Phi},\mathbf{u}_{\Phi},\sqrt{\frac{\tau}{\mu}}\mathbf{S}_{\Phi})(0)\|^h_{\dot{B}^{\frac{3}{2}}_{2,1}}+\int_0^t \|(F_{\Phi},G_{\Phi},\sqrt{\frac{\tau}{\mu}}H_{\Phi})\|^h_{\dot{B}^{\frac{3}{2}}_{2,1}}\dif t.
	\end{aligned}
\end{equation}

$\textbf{Estimate  in low-frequencies.}$ Applying $\nabla$ to $\eqref{littlewood}_1$ and $\ddiv$ to $\eqref{littlewood}_3$, we derive that
	\begin{equation}\label{lfg}
	\begin{cases}
		\partial_t\nabla a_{\Phi,k}+\lambda\dot{\theta}(t)\nabla\Lambda a_{\Phi,k} +\nabla\ddiv\mathbf{u}_{\Phi,k}
		=\nabla F_{\Phi,k},\\
		\partial_t	\mathbf{u}_{\Phi,k}+\lambda\dot{\theta}(t)\Lambda\mathbf{u}_{\Phi,k}+\nabla a_{\Phi,k}-\ddiv\mathbf{S}_{\Phi,k}
		=G_{\Phi,k},\\
		\tau\partial_t\ddiv	\mathbf{S}_{\Phi,k}+\tau\lambda\dot{\theta}(t)\ddiv\Lambda\mathbf{S}_{\Phi,k}+\ddiv\mathbf{S}_{\Phi,k}
		-\mu(\Delta+\nabla\ddiv)\mathbf{u}_{\Phi,k}
		=H_{\Phi,k}.
	\end{cases}
\end{equation}
Multiplying $\eqref{lfg}_1$ by $\mathbf{u}_{\Phi,k}$ and $\eqref{lfg}_2$ by $\nabla a_{\Phi,k}$, and then integration by parts, we get 
\begin{equation}\label{lfhs1}
	\begin{aligned}
		\partial_t&\langle\nabla a_{\Phi,k},\mathbf{u}_{\Phi,k}\rangle+2\lambda\dot{\theta}(t)\langle\nabla \Lambda a_{\Phi,k},\mathbf{u}_{\Phi,k}\rangle-\|\ddiv\mathbf{u}_{\Phi,k}\|_{L^2}^2+\|\nabla a_{\Phi,k}\|_{L^2}^2
		 -\langle\ddiv\mathbf{S}_{\Phi,k},\nabla a_{\Phi,k} \rangle\\
		=&\langle \nabla F_{\Phi,k} ,\mathbf{u}_{\Phi,k}\rangle+\langle  G_{\Phi,k},\nabla a_{\Phi,k}\rangle,
	\end{aligned}
\end{equation}
where 
\[
\langle \nabla \ddiv \mathbf{u}_{\Phi,k} ,\mathbf{u}_{\Phi,k}\rangle=-\|\ddiv\mathbf{u}_{\Phi,k}\|_{L^2}^2.
\]
Multiplying $\eqref{lfg}_2$ by $\frac{\tau}{\mu}\ddiv\mathbf{S}_{\Phi,k}$ and $\eqref{lfg}_3$ by $\frac{1}{\mu} \mathbf{u}_{\Phi,k}$, and then integration by parts, we get 
\begin{equation}\label{lfhs2}
	\begin{aligned}
		\frac{\tau}{\mu}\partial_t&\langle\mathbf{u}_{\Phi,k},\ddiv\mathbf{S}_{\Phi,k}\rangle+2\frac{\tau}{\mu}\lambda\dot{\theta}(t)\langle\Lambda\mathbf{u}_{\Phi,k},\ddiv\mathbf{S}_{\Phi,k}\rangle+\frac{\tau}{\mu}\langle \nabla a_{\Phi,k}, \ddiv\mathbf{S}_{\Phi,k}\rangle\\
		&-\frac{\tau}{\mu}\|\ddiv\mathbf{S}_{\Phi,k}\|_{L^2}^2+\frac{1}{\mu}\langle \ddiv\mathbf{S}_{\Phi,k}, \mathbf{u}_{\Phi,k}\rangle+\|\Lambda\mathbf{u}_{\Phi,k}\|_{L^2}^2+\|\ddiv\mathbf{u}_{\Phi,k}\|_{L^2}^2\\
		=&\frac{\tau}{\mu}\langle G_{\Phi,k} ,\ddiv\mathbf{T}_{\Phi,k}\rangle+\frac{\tau}{\mu}\langle \ddiv H_{\Phi,k}, \mathbf{u}_{\Phi,k}\rangle,
	\end{aligned}
\end{equation}
where
\[
-\mu\langle \nabla\ddiv\mathbf{u}_{\Phi,k}, \mathbf{u}_{\Phi,k}\rangle=\mu\|\ddiv\mathbf{u}_{\Phi,k}\|_{L^2}^2.
\]
For $k\le k_0$, let $$\widetilde{E}^2_k=2\|a_{\Phi,k}\|_{L^2}^2+2\|\mathbf{u}_{\Phi,k}\|_{L^2}^2+\frac{\tau}{\mu}\|\mathbf{S}_{\Phi,k}\|_{L^2}^2+\frac{3}{8}2^{-k}\langle\nabla a_{\Phi,k},\mathbf{u}_{\Phi,k}\rangle+\frac{3}{8}2^{-k}\langle\mathbf{u}_{\Phi,k},\frac{\tau}{\mu}\ddiv\mathbf{S}_{\Phi,k}\rangle  $$,
\[
E^2_k=\|a_{\Phi,k}\|_{L^2}^2+\|\mathbf{u}_{\Phi,k}\|_{L^2}^2+\frac{\tau}{\mu}\|\mathbf{S}_{\Phi,k}\|_{L^2}^2,
\]
and
\[
D^2_k=\|a_{\Phi,k}\|_{L^2}^2+\|\mathbf{u}_{\Phi,k}\|_{L^2}^2+\|\mathbf{S}_{\Phi,k}\|_{L^2}^2.
\]
Choosing $k_0=3$ and using \eqref{fin}, it holds
\[
\frac{1}{2}E_k^2\le \widetilde{E}^2_k \le 3E^2_k.
\]
Multiplying both \eqref{lfhs1} and \eqref{lfhs2} by $\frac{3}{8}2^{-k}$, and then adding \eqref{l2e} to the sum, followed by applying H\"older's inequality, we conclude that 
\[
\frac{\dif}{\dif t}\widetilde{E}^2_k+\frac{1}{4}\lambda\dot{\theta}(t)2^k\widetilde{E}^2_k+\frac{1}{8}2^kD_k^2 \lesssim \|(F_{\Phi,k},G_{\Phi,k},\sqrt{\frac{\tau}{\mu}}H_{\Phi,k})\|_{L^2}\widetilde{E}_k.
\]
Noting that $E_k\le D_k$, we obtain that
\begin{equation}\label{dpfd}
	\frac{\dif}{\dif t}E_k+\lambda\dot{\theta}(t)2^kE_k+2^kD_k \lesssim \|(F_{\Phi,k},G_{\Phi,k},\sqrt{\frac{\tau}{\mu}}H_{\Phi,k})\|_{L^2}.
\end{equation}
Multiplying both \eqref{dpfd} by $2^{(\frac{n}{2}-1)k}$, integrating over $(0,t)$ and summing over $k\le k_0$, we obtain
\begin{equation}\label{dpbesov}
	\begin{aligned}
		&\|(a_{\Phi},\mathbf{u}_{\Phi},\sqrt{\frac{\tau}{\mu}}\mathbf{S}_{\Phi})\|^l_{\dot{B}^{\frac{1}{2}}_{2,1}}+\lambda\int_{0}^{t}\dot{\theta}(t)\|(a_{\Phi},\mathbf{u}_{\Phi},\sqrt{\frac{\tau}{\mu}}\mathbf{S}_{\Phi})\|^l_{\dot{B}^{\frac{3}{2}}_{2,1}}\dif t\\
		&\quad+\int_{0}^{t}\|(a_{\Phi},\mathbf{u}_{\Phi},\mathbf{S}_{\Phi})\|^l_{\dot{B}^{\frac{3}{2}}_{2,1}}\dif t\\
		&\lesssim \|(a_{\Phi},\mathbf{u}_{\Phi},\sqrt{\frac{\tau}{\mu}}\mathbf{S}_{\Phi})(0)\|^l_{\dot{B}^{\frac{1}{2}}_{2,1}}+\int_0^t \|(F_{\Phi},G_{\Phi},\sqrt{\frac{\tau}{\mu}}H_{\Phi})\|^l_{\dot{B}^{\frac{1}{2}}_{2,1}}\dif t.
	\end{aligned}
\end{equation}

Therefore, combining \eqref{gpbesov} and \eqref{dpbesov}, we conclude that 
\begin{equation}\label{besovfs}
	\begin{aligned}
		&\|(a_{\Phi},\mathbf{u}_{\Phi},\sqrt{\frac{\tau}{\mu}}\mathbf{S}_{\Phi})\|_{\dot{B}^{\frac{1}{2},\frac{3}{2}}_{2,1}}+\lambda\int_{0}^{t}\dot{\theta}(t)\|(a_{\Phi},\mathbf{u}_{\Phi},\sqrt{\frac{\tau}{\mu}}\mathbf{S}_{\Phi})\|_{\dot{B}^{\frac{3}{2},\frac{5}{2}}_{2,1}}\dif t\\
		&\quad+\int_{0}^{t}\|(a_{\Phi},\mathbf{u}_{\Phi},\mathbf{S}_{\Phi})\|_{\dot{B}^{\frac{3}{2}}_{2,1}}\dif t\\
		&\lesssim \|e^{2\lambda\Lambda}(a_0,\mathbf{u}_0,\sqrt{\frac{\tau}{\mu}}\mathbf{S}_0)\|_{\dot{B}^{\frac{1}{2},\frac{3}{2}}_{2,1}}+\int_0^t \|(F_{\Phi},G_{\Phi},\sqrt{\frac{\tau}{\mu}}H_{\Phi})\|_{\dot{B}^{\frac{1}{2},\frac{3}{2}}_{2,1}}\dif t.
	\end{aligned}
\end{equation}
This finishes the proof of Proposition \ref{prop1}.

\end{proof}

To close the estimate in Proposition \ref{prop1}, we need to bound the nonlinear terms \(F, G\), and \(H\) in the analytic weighted spaces. This is achieved by the following lemma.

\begin{lemma}\label{efa}
	There exist constants $\varepsilon_0\in(0,1)$ and $C$ such that, if
	\[
	\|a(t)\|_{\dot{B}^{\frac{3}{2}}_{2,1}}\le \varepsilon_0 ,
	\]
	then for $s\in(-\frac{3}{2}, \frac{3}{2}]$, the following estimates hold
	\begin{equation}\label{tesfa}
		\left	\|\left(F_{\Phi}, G_{\Phi}, \sqrt{\tau}H_{\Phi}\right)\right\|_{\dot{B}^{s}_{2,1}}\le C\|(a_{\Phi},\mathbf{u}_{\Phi}, \mathbf{S}_{\Phi})\|_{\dot{B}^{\frac{3}{2}}_{2,1}}\|(a_{\Phi},\mathbf{u}_{\Phi}, \sqrt{\tau}\mathbf{S}_{\Phi})\|_{\dot{B}^{s+1}_{2,1}}.
	\end{equation} 
\end{lemma}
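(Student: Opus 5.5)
The plan is to apply the product estimate \eqref{fgbe} of Lemma \ref{fge} term by term. In each product we put the factor carrying no derivative, or the lower-order one, in $\dot B^{3/2}_{2,1}$, and the differentiated factor in $\dot B^{s}_{2,1}$. Wherever a derivative appears, the factor $\nabla f$ or $\mathrm{div} f$ is bounded through $\|(\nabla f)_\Phi\|_{\dot B^{s}_{2,1}}\lesssim \|f_\Phi\|_{\dot B^{s+1}_{2,1}}$, since $e^{\Phi(t,\Lambda)}$ commutes with derivatives. The restriction $s\in(-\frac32,\frac32]$ is exactly the range of \eqref{fgbe}. One point needs care: for $s\le \frac12$ one may not place the factor of regularity $s+1$ in the $\dot B^{3/2}$ slot. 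Therefore every product must be arranged as (undifferentiated factor in $\dot B^{3/2}$)$\times$(differentiated factor in $\dot B^{s}$).

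\textbf{Step 1 ($F$).} Write $F=-\mathbf{u}\cdot\nabla a-a\,\mathrm{div}\,\mathbf{u}$ rather than keeping the divergence outside. Then \eqref{fgbe} gives
\[
\|F_\Phi\|_{\dot B^{s}_{2,1}}\lesssim \|\mathbf{u}_\Phi\|_{\dot B^{3/2}_{2,1}}\|a_\Phi\|_{\dot B^{s+1}_{2,1}}+\|a_\Phi\|_{\dot B^{3/2}_{2,1}}\|\mathbf{u}_\Phi\|_{\dot B^{s+1}_{2,1}} .
\]

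\textbf{Step 2 ($\sqrt\tau H$).} The term $H$ is bilinear in $(\mathbf{u},\mathbf{S})$ and contains one derivative. For $\sqrt\tau\,\mathbf{u}\cdot\nabla\mathbf{S}$ we put $\mathbf{u}_\Phi\in\dot B^{3/2}$ and $\sqrt\tau\nabla\mathbf{S}_\Phi\in \dot B^{s}$, which is bounded by $\|\sqrt\tau\mathbf{S}_\Phi\|_{\dot B^{s+1}}$. For $\sqrt\tau\nabla\mathbf{u}\cdot\mathbf{S}$ and its transpose we put $\mathbf{S}_\Phi\in\dot B^{3/2}$ (unweighted) and $\nabla\mathbf{u}_\Phi\in\dot B^{s}$, and then use $\sqrt\tau\le 1$ from \eqref{taura}. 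This explains why the first factor on the right of \eqref{tesfa} involves the unweighted $\mathbf{S}$, while the second involves $\sqrt\tau\mathbf{S}$.

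\textbf{Step 3 ($G$).} The term $\mathbf{u}\cdot\nabla\mathbf{u}$ is handled as in Step 1. With $A=1/\gamma$ we have $1-\gamma A(1+a)^{\gamma-2}=1-(1+a)^{\gamma-2}=:-h(a)$, where $h$ is analytic near $0$ with $h(0)=0$. Likewise $\frac{1}{1+a}=1-\frac{a}{1+a}$. So the remaining terms are $h(a)\nabla a$, $\mathbf{S}\cdot\nabla a$, and $\frac{a}{1+a}\mathbf{S}\cdot\nabla a$.

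For $h(a)=\sum_{k\ge1}c_k a^k$ we use \eqref{fgbek} to get
\[
\|h(a)_\Phi\|_{\dot B^{3/2}_{2,1}}\le\sum_{k\ge 1}|c_k|C^k\|a_\Phi\|^k_{\dot B^{3/2}_{2,1}}\lesssim\|a_\Phi\|_{\dot B^{3/2}_{2,1}},
\]
and then apply \eqref{fgbe} with $\nabla a_\Phi\in\dot B^{s}$. The term $\mathbf{S}\cdot\nabla a$ is bounded by $\|\mathbf{S}_\Phi\|_{\dot B^{3/2}}\|a_\Phi\|_{\dot B^{s+1}}$. The cubic term is handled by two applications of \eqref{fgbe} together with the analytic expansion of $\frac{a}{1+a}$, giving an extra factor that is bounded by a constant thanks to smallness.

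\textbf{Main obstacle.} The hardest step is the convergence of the power series in Step 3. The smallness needed is that of $\|a_\Phi\|_{\dot B^{3/2}_{2,1}}$, with the analytic weight, so that $C\|a_\Phi\|_{\dot B^{3/2}}<1$. The hypothesis as written only concerns $\|a\|_{\dot B^{3/2}_{2,1}}$. I would interpret the assumption as $\|a_\Phi(t)\|_{\dot B^{3/2}_{2,1}}\le\varepsilon_0$. This is available in the bootstrap from Proposition \ref{prop1} and the smallness of the data, and it is presumably what is intended, since $\|a\|\le\|a_\Phi\|$. I would then choose $\varepsilon_0$ so that $C\varepsilon_0\le \frac12$, so that both series are dominated by geometric series. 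With that choice, the estimate is uniform in $\tau$ because $\tau$ enters only through $\sqrt\tau\le1$.
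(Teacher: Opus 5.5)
Your proposal is correct and follows the paper's proof: apply the weighted product estimate \eqref{fgbe} term by term, and control the functions of $a$ through Taylor expansion together with \eqref{fgbek}. Your splitting $\frac{1}{1+a}=1-\frac{a}{1+a}$ handles more cleanly the term the paper bounds via $\|(\frac{1}{1+a})_\Phi\|_{\dot B^{3/2}_{2,1}}\le C$, and you are right that the argument, including the paper's, actually needs smallness of $\|a_\Phi\|_{\dot B^{3/2}_{2,1}}$ rather than of $\|a\|_{\dot B^{3/2}_{2,1}}$.
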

\begin{proof}
	\[
	\left	\|\left(1-(1+a)^{\gamma-2}\right)_{\Phi}\right\|_{\dot{B}^{\frac{3}{2}}_{2,1}}\le C\|a_{\Phi}\|_{\dot{B}^{\frac{3}{2}}_{2,1}},\, 	\left\|\left(\frac{a}{1+a}\right)_{\Phi}\right\|_{\dot{B}^{\frac{3}{2}}_{2,1}}\le C\|a_{\Phi}\|_{\dot{B}^{\frac{3}{2}}_{2,1}},\,
	\left	\|\left(\frac{1}{1+a}\right)_{\Phi}\right\|_{\dot{B}^{\frac{3}{2}}_{2,1}} \leq C.
	\]
	For the proof of the first two estimates, see Lemma 2.9 in \cite{pan}; we now prove the third. By using Taylor expansion, we get
	\[
	\frac{1}{1+a}=\sum_{k=0}^{\infty}(-1)^k a^k.
	\]
	Then, using \eqref{fgbek} and $\varepsilon<1$, we derive that
	\[
	\left\|\left(\frac{1}{1+a}\right)_{\Phi}\right\|_{\dot{B}^{\frac{3}{2}}_{2,1}}\le \sum_{k=0}^{\infty}\|(a^k)_{\Phi}\|_{\dot{B}^{\frac{3}{2}}_{2,1}}\le \sum_{k=0}^{\infty}\|a_{\Phi}\|_{\dot{B}^{\frac{3}{2}}_{2,1}}^k\le \frac{1}{1-\|a_{\Phi}\|_{\dot{B}^{\frac{3}{2}}_{2,1}}}\leq C.
	\]
Then, from the definition of $F,G,H$ and using Lemma \ref{fge}, we derive that
\begin{align*}
\|(F_{\Phi},G_{\Phi},\sqrt{\tau}H_{\Phi})\|_{\dot{B}^{s}_{2,1}}&\leq C\|( a_{\Phi},\mathbf{u}_{\Phi},\mathbf{S}_{\Phi})\|_{\dot{B}^{s}_{2,1}}\|(\nabla a_{\Phi},\nabla\mathbf{u}_{\Phi},\sqrt{{\tau}}\nabla\mathbf{S}_{\Phi})\|_{\dot{B}^{\frac{1}{2},\frac{3}{2}}_{2,1}}  \\
&\leq C\|(a_{\Phi},\mathbf{u}_{\Phi},\mathbf{S}_{\Phi})\|_{\dot{B}^{s}_{2,1}}\|(a_{\Phi},\mathbf{u}_{\Phi},\sqrt{{\tau}}\mathbf{S}_{\Phi})\|_{\dot{B}^{\frac{3}{2},\frac{5}{2}}_{2,1}}.
\end{align*}
Thus, the proof of this lemma is completed.
\end{proof}

\subsection{Proof of Theorem \ref{th1.1}}

We now prove Theorem~\ref{th1.1} using Proposition~\ref{prop1} and Lemma~\ref{efa}. Assuming the existence of positive constants $\varepsilon$ and $C$ as in Lemma~\ref{efa}, and substituting \eqref{tesfa} into \eqref{priori}, we obtain the key estimate
\begin{equation}\label{priories}
	\begin{aligned}
		&\|(a_{\Phi},\mathbf{u}_{\Phi},{\sqrt{\tau}}\mathbf{S}_{\Phi})\|_{\dot{B}^{\frac{1}{2},\frac{3}{2}}_{2,1}}
		+\lambda\|(a_{\Phi},\mathbf{u}_{\Phi},\sqrt{{\tau}}\mathbf{S}_{\Phi})\|_{L^1_{t,\dot{\theta}}(\dot{B}^{\frac{3}{2},\frac{5}{2}}_{2,1})}
		+\|(a_{\Phi},\mathbf{u}_{\Phi},\mathbf{S}_{\Phi})\|_{L^1_t(\dot{B}^{\frac{3}{2}}_{2,1})}\\
		&\le C\left(\|e^{2\lambda\Lambda}(a_0,\mathbf{u}_0,\sqrt{{\tau}}\mathbf{S}_0)\|_{\dot{B}^{\frac{1}{2},\frac{3}{2}}_{2,1}}
		+\int_0^t \|(a_{\Phi},\mathbf{u}_{\Phi},\mathbf{S}_{\Phi})\|_{\dot{B}^{\frac{3}{2}}_{2,1}}
		\|(a_{\Phi},\mathbf{u}_{\Phi},\sqrt{{\tau}}\mathbf{S}_{\Phi})\|_{\dot{B}^{\frac{3}{2},\frac{5}{2}}_{2,1}}\dif t\right).
	\end{aligned}
\end{equation}

To construct global solutions, we employ the standard Friedrichs approximation method (see, e.g., \cite{refz5} and Chapter~10 of \cite{refp1}). For each $k\in\mathbb N^+$, let $L^2_k(\mathbb R^n)$ denote the space of $L^2$ functions whose Fourier support lies in the annulus
\[
\mathcal C_k:=\{\xi\in\mathbb R^3: k^{-1}\le |\xi|\le k\}.
\]
The projection operator $\dot{\mathbb E}_k$ is defined by
\[
\dot{\mathbb E}_k U(x):=\mathcal F^{-1}\bigl(\mathbf 1_{\mathcal C_k}\mathcal F U(\xi)\bigr).
\]
Consider the truncated system
\begin{equation}\label{aode}
	\begin{aligned}
		\frac{\dif}{\dif t}
		&\begin{pmatrix}
			a \\
			\mathbf{u} \\
			\tau\mathbf{S}
		\end{pmatrix}
		=\dot{\mathbb E}_k
		\begin{pmatrix}
			-\ddiv \mathbf{u}-\nabla\cdot(a\mathbf{u}) \\
			-\nabla a+\ddiv\mathbf{S}+\mathbf{u}\cdot\nabla\mathbf{u}+[1-(1+a)^{\gamma-2}]\nabla a+\frac{1}{1+a}\mathbf{S}\cdot\nabla a \\
			-\mathbf{S}+2\mathbf{D}(\mathbf{u})-\tau\mathbf{u}\cdot\nabla \mathbf{S}+\tau\nabla\mathbf{u}\cdot\mathbf{S}+\tau\mathbf{S}\cdot(\nabla\mathbf{u})^T
		\end{pmatrix},\\
		&	\begin{pmatrix}
			a \\
			\mathbf{u} \\
			\tau\mathbf{S}
		\end{pmatrix}_{|t=0}
		=\dot{\mathbb E}_k
		\begin{pmatrix}
			a_0 \\
			\mathbf{u}_0 \\
			\tau\mathbf{S}_0
		\end{pmatrix}.
	\end{aligned}
\end{equation}
Here we have set $a:=\rho-1$ as before. Provided $\|a_0\|_{\dot B^{n/2}_{2,1}}$ is sufficiently small, the quantity $1+\dot{\mathbb E}_k a_0$ remains positive for all large $k$, so the initial datum in \eqref{aode} is well defined. Owing to the low-frequency cut-off, all Sobolev norms are equivalent on the range of $\dot{\mathbb E}_k$. Standard ODE theory then yields, for each $k$, a unique maximal solution
\[
(a^k,\mathbf u^k,\mathbf S^k)\in C^1\bigl([0,T_k);\tilde L^2_k(\mathbb R^n)\bigr),
\]
where the state space $\tilde L^2_k$ consists of triples satisfying $\inf_x(1+a)>0$.

For such a solution we introduce the analytic radius via the ODE
\begin{equation}\label{thetak}
	\dot{\theta}_k(t)=\|(a^k_{\Phi_k}, \mathbf{u}^k_{\Phi_k}, \mathbf{S}^k_{\Phi_k})\|_{\dot{B}^{\frac{3}{2}}_{2,1}},\quad
	\theta_k(0)=0,\quad
	\Phi_k(t,\xi)=(2\lambda_0-\lambda\theta_k(t))|\xi|.
\end{equation}
Since the Fourier transform is compactly supported, the right-hand side is Lipschitz in $\theta_k$, and \eqref{thetak} admits a unique solution on $[0,T_k)$.

The initial data $(a_0,\mathbf u_0,\mathbf S_0)$ are analytic, with $e^{2\lambda_0\Lambda}(a_0,\mathbf u_0,\mathbf S_0)\in\dot B^{\frac12,\frac32}_{2,1}$. It is clear that
\[
\|e^{2\lambda_0\Lambda}(a^k_0,\mathbf u^k_0,\mathbf S^k_0)\|_{\dot B^{\frac12,\frac32}_{2,1}}
\le \|e^{2\lambda_0\Lambda}(a_0,\mathbf u_0,\mathbf S_0)\|_{\dot B^{\frac12,\frac32}_{2,1}}
\le \varepsilon_0\lambda_0.
\]
We now define
\[
T^\ast_k:=\sup\Bigl\{t\in[0,T_k):\;
\theta_k(t)\le \frac{\lambda_0}{\lambda},\;
\|(a^k,\mathbf u^k,\mathbf S^k)\|_{\dot B^{\frac12,\frac32}_{2,1}}\le M\varepsilon_0\Bigr\},
\]
with $\lambda$ and $M$ to be fixed later. Our goal is to show that $T_k^\ast=T_k$ by a standard continuity argument, using the a priori estimate \eqref{priories} adapted to the approximate system.

Applying the same energy procedure that led to \eqref{priories} to \eqref{aode}, we obtain, for every $t\in[0,T_k^\ast)$,
\begin{equation}\label{odepriori}
	\begin{aligned}
		&\|(a_{\Phi}^k,\mathbf{u}_{\Phi}^k,\sqrt{{\tau}}\mathbf{S}_{\Phi}^k)\|_{\dot{B}^{\frac{1}{2},\frac{3}{2}}_{2,1}}
		+\lambda\|(a_{\Phi}^k,\mathbf{u}_{\Phi}^k,\sqrt{{\tau}}\mathbf{S}_{\Phi}^k)\|_{L^1_{t,\dot{\theta}}(\dot{B}^{\frac{3}{2},\frac{5}{2}}_{2,1})}
		+\|(a_{\Phi}^k,\mathbf{u}_{\Phi}^k,\mathbf{S}_{\Phi}^k)\|_{L^1_t(\dot{B}^{\frac{3}{2}}_{2,1})}\\
		&\le C_0\Bigl(\|e^{2\lambda\Lambda}(a_0^k,\mathbf{u}_0^k,\sqrt{{\tau}}\mathbf{S}_0^k)\|_{\dot{B}^{\frac{1}{2},\frac{3}{2}}_{2,1}}
		+\int_0^t \|(a_{\Phi}^k,\mathbf{u}_{\Phi}^k,\mathbf{S}_{\Phi}^k)\|_{\dot{B}^{\frac{3}{2}}_{2,1}}
		\|(a_{\Phi}^k,\mathbf{u}_{\Phi}^k,\sqrt{{\tau}}\mathbf{S}_{\Phi}^k)\|_{\dot{B}^{\frac{3}{2},\frac{5}{2}}_{2,1}}\dif t\Bigr)\\
		&\le C_0\varepsilon_0\lambda_0
		+C_0\int_0^t \dot{\theta}_k(t')\,
		\|(a_{\Phi}^k,\mathbf{u}_{\Phi}^k,\sqrt{{\tau}}\mathbf{S}_{\Phi}^k)(t')\|_{\dot{B}^{\frac{3}{2},\frac{5}{2}}_{2,1}}\dif t'.
	\end{aligned}
\end{equation}
Choosing $\lambda=2C_0$, the last integral can be absorbed by the second term on the left, and hence
\begin{equation}\label{odepriori1}
	\begin{aligned}
		&\|(a_{\Phi}^k,\mathbf{u}_{\Phi}^k,\sqrt{{\tau}}\mathbf{S}_{\Phi}^k)\|_{\dot{B}^{\frac{1}{2},\frac{3}{2}}_{2,1}}
		+C_0\|(a_{\Phi}^k,\mathbf{u}_{\Phi}^k,\sqrt{{\tau}}\mathbf{S}_{\Phi}^k)\|_{L^1_{t,\dot{\theta}}(\dot{B}^{\frac{3}{2},\frac{5}{2}}_{2,1})}
		+\|(a_{\Phi}^k,\mathbf{u}_{\Phi}^k,\mathbf{S}_{\Phi}^k)\|_{L^1_t(\dot{B}^{\frac{3}{2}}_{2,1})}\\
		&\le C_0\varepsilon_0\lambda_0.
	\end{aligned}
\end{equation}
We now set $M:=2C_0\lambda_0$ and choose $\varepsilon_0$ so small that $C_0\varepsilon_0\le 1/(4C_0)=1/(2\lambda)$. Then \eqref{odepriori1} implies, for all $t\in[0,T_k^\ast)$,
\[
\theta_k(t)\le \frac{\lambda_0}{2\lambda},\qquad
\|(a^k,\mathbf u^k,\mathbf S^k)\|_{\dot B^{\frac12,\frac32}_{2,1}}\le \frac{M\varepsilon_0}{2}.
\]
By the definition of $T_k^\ast$ and a standard continuity argument, we conclude that $T_k^\ast=T_k$; in fact, the estimates hold uniformly up to $T_k$, and the solution exists globally in time. Moreover, the analytic radius remains bounded below by $\lambda_0$ because $\theta_k(t)\le \lambda_0/\lambda$.

The uniform bounds \eqref{odepriori1} allow us to pass to the limit $k\to\infty$ (after extracting a subsequence) and obtain a solution $(a,\mathbf u,\mathbf S)$ of the original system \eqref{lsys} satisfying the same estimate. The uniqueness follows from the same energy estimates. We omit the standard compactness details; they are analogous to those in Chapter~10 of \cite{refp1}. This completes the proof of Theorem~\ref{th1.1}.

\section{Proof of Theorem \ref{th1.2}}

To investigate the relaxation limit, we first consider the following limiting system \eqref{jdxt} and rewrite it in the following form:
\begin{equation}\label{jdns}
	\begin{cases}
		\rho_t+\nabla\cdot(\rho \mathbf{u})=0,\\
		\rho\mathbf{u}_t+\rho\mathbf{u}\cdot\nabla\mathbf{u}+\nabla p=\ddiv (\rho \mathbf{S}),\\
		\mathbf{S}=2\mu\mathbf{D}(\mathbf{u}).
	\end{cases}
\end{equation}
Then, let $a_0=\rho_0-1$ and $(\rho_0, \mathbf{u}_0)$ denote the solution of the Cauchy problem for above system. Next, for any function $f(t,x)$, we introduce the analytic transform
\begin{equation}\label{definejd}
	f_\Gamma(t,x) := \mathcal{F}^{-1}_{\xi \to x} \bigl( e^{\Gamma(t,\xi)} \hat{f}(t,\xi) \bigr) = e^{\Gamma(t,\Lambda)} f(t,x),
\end{equation}
where $\Gamma:=(2\lambda_0-\lambda\zeta(t))|\xi|$ and $\zeta(t):=\int_{0}^{t}\|(a_{\Gamma}^0, \mathbf{u}_{\Gamma}^0)\|_{\dot{B}^{3/2}_{2,1}}+\| \mathbf{u}_{\Gamma}^0\|_{\dot{B}^{5/2}_{2,1}}\,\dif t$.
Then, by Corollary \ref{jd1.1}, we have
\[
\|(a_{\Gamma}^0,\mathbf{u}_\Gamma^0)\|_{L^\infty(\dot{B}^{\frac{1}{2},\frac{3}{2}}_{2,1})}
+\|(a_{\Gamma}^0,\mathbf{u}_\Gamma^0)\|_{L^1(\dot{B}^{\frac{3}{2}}_{2,1})}+\|\mathbf{u}_{\Gamma}^0\|_{L^1(\dot{B}^{\frac{5}{2}}_{2,1})}
\lesssim\|e^{2\lambda_0\Lambda}(a^0_0,\mathbf{u}^0_0)\|_{\dot{B}^{\frac{1}{2},\frac{3}{2}}_{2,1}}.
\]
Moreover, for $s=1,2$, the same method yields the higher-order bound
\begin{align*}
	&	\|(a_{\Gamma}^0,\mathbf{u}_\Gamma^0)\|_{\dot{B}^{\frac{1}{2}+s,\frac{3}{2}+s}_{2,1}}+\lambda\|(a_{\Gamma}^0,\mathbf{u}_\Gamma^0)\|_{L^1_{t,\dot{\zeta}}(\dot{B}^{\frac{3}{2}+s,\frac{5}{2}+s}_{2,1})}+\|(a_{\Gamma}^0,\mathbf{u}_\Gamma^0)\|_{L_t^1(\dot{B}^{\frac{3}{2}+s}_{2,1})}+\|\mathbf{u}_{\Gamma}^0\|_{L^1_t(\dot{B}^{\frac{5}{2}+s}_{2,1})}\\
	&\lesssim\|e^{2\lambda_0\Lambda}(a^0_0,\mathbf{u}^0_0)\|_{\dot{B}^{\frac{1}{2}+s,\frac{3}{2}+s}_{2,1}}+\|(a^0\ddiv\mathbf{u}^0)_{\Gamma}\|_{L^1_t(\dot{B}^{\frac{1}{2}+s,\frac{3}{2}+s}_{2,1})}+\|(\nabla a^0\cdot\mathbf{u}^0)_{\Gamma}\|_{L^1_t(\dot{B}^{\frac{1}{2}+s,\frac{3}{2}+s}_{2,1})}\\&\quad+\|(\mathbf{u}^0\cdot\nabla\mathbf{u}^0)_{\Gamma}\|_{L^1_t(\dot{B}^{\frac{1}{2}+s,\frac{3}{2}+s}_{2,1})}+\|([1-\gamma A(1+a^0)^{\gamma -2}]\nabla a^0)_{\Gamma}\|_{L^1_t(\dot{B}^{\frac{1}{2}+s,\frac{3}{2}+s}_{2,1})}\\&\quad+\|(\frac{\mathbf{D}(\mathbf{u}^0)\cdot\nabla a^0}{1+a^0})_{\Gamma}\|_{L^1_t(\dot{B}^{\frac{1}{2}+s,\frac{3}{2}+s}_{2,1})}.
\end{align*}

Using Lemma \ref{fge} and \ref{efa}, we have
\begin{align*}
&	\|(a^0\ddiv\mathbf{u}^0)_{\Gamma}\|_{L^1_t(\dot{B}^{\frac{1}{2}+s,\frac{3}{2}+s}_{2,1})}+\|(\nabla a^0\cdot\mathbf{u}^0)_{\Gamma}\|_{L^1_t(\dot{B}^{\frac{1}{2}+s,\frac{3}{2}+s}_{2,1})}+\|(\mathbf{u}^0\cdot\nabla\mathbf{u}^0)_{\Gamma}\|_{L^1_t(\dot{B}^{\frac{1}{2}+s,\frac{3}{2}+s}_{2,1})} \\&\lesssim \int_{0}^{t} 	\|(a^0_\Gamma,\mathbf{u}^0_{\Gamma})\|_{\dot{B}^{\frac{3}{2}}_{2,1}}\|(\nabla a^0_\Gamma,\nabla\mathbf{u}^0_{\Gamma})\|_{\dot{B}^{\frac{1}{2}+s,\frac{3}{2}+s}_{2,1}}+\|(\nabla a^0_\Gamma,\nabla\mathbf{u}^0_{\Gamma})\|_{\dot{B}^{\frac{3}{2}}_{2,1}}\|(a^0_\Gamma,\mathbf{u}^0_{\Gamma})\|_{\dot{B}^{\frac{1}{2}+s,\frac{3}{2}+s}_{2,1}}\dif t\\
&\lesssim \int_{0}^{t} 	\|(a^0_\Gamma,\mathbf{u}^0_{\Gamma})\|_{\dot{B}^{\frac{3}{2}}_{2,1}}\|( a^0_\Gamma,\mathbf{u}^0_{\Gamma})\|_{\dot{B}^{\frac{3}{2}+s,\frac{5}{2}+s}_{2,1}}+\|( a^0_\Gamma,\mathbf{u}^0_{\Gamma})\|_{\dot{B}^{\frac{5}{2}}_{2,1}}\|(a^0_\Gamma,\mathbf{u}^0_{\Gamma})\|_{\dot{B}^{\frac{1}{2}+s,\frac{3}{2}+s}_{2,1}}\dif t,
\end{align*}
\begin{align*}
&	\int_{0}^{t} \|([1-\gamma A(1+a^0)^{\gamma -2}]\nabla a^0)_{\Gamma}\|_{\dot{B}^{\frac{1}{2}+s,\frac{3}{2}+s}_{2,1}}	\dif t\\
&\lesssim 	\int_{0}^{t} \|[1-\gamma A(1+a^0)^{\gamma -2}]_{\Gamma}\|_{\dot{B}^{\frac{3}{2}}_{2,1}}\|\nabla a^0_{\Gamma}\|_{\dot{B}^{\frac{1}{2}+s,\frac{3}{2}+s}_{2,1}}+\|[1-\gamma A(1+a^0)^{\gamma -2}]_{\Gamma}\|_{\dot{B}^{\frac{1}{2}+s,\frac{3}{2}+s}_{2,1}}\|\nabla a^0_{\Gamma}\|_{\dot{B}^{\frac{3}{2}}_{2,1}}	\dif t\\
&\lesssim \int_{0}^{t} \|a^0_{\Gamma}\|_{\dot{B}^{\frac{3}{2}}_{2,1}}\| a^0_{\Gamma}\|_{\dot{B}^{\frac{3}{2}+s,\frac{5}{2}+s}_{2,1}}+\sum^{\infty}_{k=1} \|[(a^0)^k]_{\Gamma}\|_{\dot{B}^{\frac{1}{2}+s,\frac{3}{2}+s}_{2,1}}\| a^0_{\Gamma}\|_{\dot{B}^{\frac{5}{2}}_{2,1}}	\dif t\\
&\lesssim \int_{0}^{t} \|a^0_{\Gamma}\|_{\dot{B}^{\frac{3}{2}}_{2,1}}\| a^0_{\Gamma}\|_{\dot{B}^{\frac{3}{2}+s,\frac{5}{2}+s}_{2,1}}+\sum^{\infty}_{k=1} \|a^0_{\Gamma}\|_{\dot{B}^{\frac{1}{2}+s,\frac{3}{2}+s}_{2,1}}\|a^0_{\Gamma}\|^{k-1}_{\dot{B}^{\frac{3}{2}}_{2,1}}\| a^0_{\Gamma}\|_{\dot{B}^{\frac{5}{2}}_{2,1}}	\dif t\\
&\lesssim \int_{0}^{t} \|a^0_{\Gamma}\|_{\dot{B}^{\frac{3}{2}}_{2,1}}\| a^0_{\Gamma}\|_{\dot{B}^{\frac{3}{2}+s,\frac{5}{2}+s}_{2,1}}+ \|a^0_{\Gamma}\|_{\dot{B}^{\frac{1}{2}+s,\frac{3}{2}+s}_{2,1}}\| a^0_{\Gamma}\|_{\dot{B}^{\frac{5}{2}}_{2,1}}	\dif t,
\end{align*}
and
\begin{align*}
&	\int_{0}^{t}\|(\frac{\mathbf{D}(\mathbf{u}^0)\cdot\nabla a^0}{1+a^0})_{\Gamma}\|_{\dot{B}^{\frac{1}{2}+s,\frac{3}{2}+s}_{2,1}}	\dif t\\
&\lesssim 	\int_{0}^{t}\|\nabla a^0_{\Gamma}\|_{\dot{B}^{\frac{3}{2}}_{2,1}}\|\nabla \mathbf{u}^0_{\Gamma}\|_{\dot{B}^{\frac{3}{2}}_{2,1}}\|(\frac{1}{1+a^0})_{\Gamma}\|_{\dot{B}^{\frac{1}{2}+s,\frac{3}{2}+s}_{2,1}}+\|\nabla a^0_{\Gamma}\|_{\dot{B}^{\frac{1}{2}+s,\frac{3}{2}+s}_{2,1}}\|\nabla \mathbf{u}^0_{\Gamma}\|_{\dot{B}^{\frac{3}{2}}_{2,1}}\|(\frac{1}{1+a^0})_{\Gamma}\|_{\dot{B}^{\frac{3}{2}}_{2,1}} \\
&\quad \quad+\|\nabla a^0_{\Gamma}\|_{\dot{B}^{\frac{3}{2}}_{2,1}}\|\nabla \mathbf{u}^0_{\Gamma}\|_{\dot{B}^{\frac{1}{2}+s,\frac{3}{2}+s}_{2,1}}\|(\frac{1}{1+a^0})_{\Gamma}\|_{\dot{B}^{\frac{3}{2}}_{2,1}}	\dif t\\
&\lesssim 	\int_{0}^{t}\| a^0_{\Gamma}\|_{\dot{B}^{\frac{5}{2}}_{2,1}}\| \mathbf{u}^0_{\Gamma}\|_{\dot{B}^{\frac{5}{2}}_{2,1}}\sum_{k=0}^{\infty}\|[(a^0)^k]_{\Gamma}\|_{\dot{B}^{\frac{1}{2}+s,\frac{3}{2}+s}_{2,1}}+\| a^0_{\Gamma}\|_{\dot{B}^{\frac{3}{2}+s,\frac{5}{2}+s}_{2,1}}\| \mathbf{u}^0_{\Gamma}\|_{\dot{B}^{\frac{5}{2}}_{2,1}} +\| a^0_{\Gamma}\|_{\dot{B}^{\frac{5}{2}}_{2,1}}\| \mathbf{u}^0_{\Gamma}\|_{\dot{B}^{\frac{3}{2}+s,\frac{5}{2}+s}_{2,1}}	\dif t\\
&\lesssim 	\int_{0}^{t}\| a^0_{\Gamma}\|_{\dot{B}^{\frac{5}{2}}_{2,1}}\| \mathbf{u}^0_{\Gamma}\|_{\dot{B}^{\frac{5}{2}}_{2,1}}(1+\|a^0_{\Gamma}\|_{\dot{B}^{\frac{1}{2}+s,\frac{3}{2}+s}_{2,1}})\sum_{k=0}^{\infty}\|a^0_{\Gamma}\|^k_{\dot{B}^{\frac{3}{2}}_{2,1}})+\| a^0_{\Gamma}\|_{\dot{B}^{\frac{3}{2}+s,\frac{5}{2}+s}_{2,1}}\| \mathbf{u}^0_{\Gamma}\|_{\dot{B}^{\frac{5}{2}}_{2,1}} +\| a^0_{\Gamma}\|_{\dot{B}^{\frac{5}{2}}_{2,1}}\| \mathbf{u}^0_{\Gamma}\|_{\dot{B}^{\frac{3}{2}+s,\frac{5}{2}+s}_{2,1}}	\dif t\\
&\lesssim 	\int_{0}^{t}\| a^0_{\Gamma}\|_{\dot{B}^{\frac{5}{2}}_{2,1}}\| \mathbf{u}^0_{\Gamma}\|_{\dot{B}^{\frac{5}{2}}_{2,1}}(1+\|a^0_{\Gamma}\|_{\dot{B}^{\frac{1}{2}+s,\frac{3}{2}+s}_{2,1}})+\| a^0_{\Gamma}\|_{\dot{B}^{\frac{3}{2}+s,\frac{5}{2}+s}_{2,1}}\| \mathbf{u}^0_{\Gamma}\|_{\dot{B}^{\frac{5}{2}}_{2,1}} +\| a^0_{\Gamma}\|_{\dot{B}^{\frac{5}{2}}_{2,1}}(\| \mathbf{u}^0_{\Gamma}\|_{\dot{B}^{\frac{3}{2}+s}_{2,1}}+\| \mathbf{u}^0_{\Gamma}\|_{\dot{B}^{\frac{5}{2}+s}_{2,1}})	\dif t.
\end{align*}
Thus, combining above  estimates, it hold 
\begin{equation}\label{jdgjzex}
	\begin{aligned}
		&	\|(a_{\Gamma}^0,\mathbf{u}_\Gamma^0)\|_{\dot{B}^{\frac{1}{2}+s,\frac{3}{2}+s}_{2,1}}+\lambda\|(a_{\Gamma}^0,\mathbf{u}_\Gamma^0)\|_{L^1_{t,\dot{\zeta}}(\dot{B}^{\frac{3}{2}+s,\frac{5}{2}+s}_{2,1})}+\|(a_{\Gamma}^0,\mathbf{u}_\Gamma^0)\|_{L_t^1(\dot{B}^{\frac{3}{2}+s}_{2,1})}+\|\mathbf{u}_{\Gamma}^0\|_{L^1_t(\dot{B}^{\frac{5}{2}+s}_{2,1})}\\
		&\lesssim\|e^{2\lambda_0\Lambda}(a^0_0,\mathbf{u}^0_0)\|_{\dot{B}^{\frac{1}{2}+s,\frac{3}{2}+s}_{2,1}}+\|(a_{\Gamma}^0,\mathbf{u}_\Gamma^0)\|_{L^1_{t,\dot{\zeta}}(\dot{B}^{\frac{3}{2}+s,\frac{5}{2}+s}_{2,1})}+\int_{0}^{t} 	\|( a^0_\Gamma,\mathbf{u}^0_{\Gamma})\|_{\dot{B}^{\frac{5}{2}}_{2,1}}\|(a^0_\Gamma,\mathbf{u}^0_{\Gamma})\|_{\dot{B}^{\frac{1}{2}+s,\frac{3}{2}+s}_{2,1}}\dif t.
	\end{aligned}
\end{equation}
Then, choosing $\lambda$ large enough and using Gronwall's inequality, we can conclude that
\begin{equation}\label{jdgj1}
	\begin{aligned}
		\|(a_{\Gamma}^0,\mathbf{u}_\Gamma^0)\|_{L^\infty(\dot{B}^{\frac{1}{2}+s,\frac{3}{2}+s}_{2,1})}+\|(a_{\Gamma}^0,\mathbf{u}_\Gamma^0)\|_{L^1(\dot{B}^{\frac{3}{2}+s}_{2,1})}+\|\mathbf{u}_{\Gamma}^0\|_{L^1(\dot{B}^{\frac{5}{2}+s}_{2,1})}
		\lesssim\|e^{2\lambda_0\Lambda}(a^0_0,\mathbf{u}^0_0)\|_{\dot{B}^{\frac{1}{2}+s,\frac{3}{2}+s}_{2,1}},
	\end{aligned}
\end{equation}
where $s=1,2$.

Now, we consider the error equations for the classical equation \eqref{jdns} and the viscoelastic model \eqref{ysys}. Let $(\rho^\tau, \mathbf{u}^\tau, \mathbf{S}^\tau)$ denote the solutions of \eqref{ysys}. Define the differences
\[
(\tilde a, \tilde{\mathbf u}, \tilde{\mathbf S}):=(\rho^\tau-\rho^0,\; \mathbf{u}^\tau-\mathbf{u}^0,\; \mathbf{S}^\tau-\mathbf{S}^0).
\]
Then, we derive the error equations as follows
\begin{equation}\label{rdfc}
	\begin{cases}
		\tilde a_t+\ddiv \tilde{\mathbf u}=\tilde F+\bar F,\\[1mm]
		\tilde{\mathbf u}_t+\nabla\tilde a-\ddiv\tilde{\mathbf S}=\tilde G+\bar G,\\[1mm]
		\tau\tilde{\mathbf S}_t+\tilde{\mathbf S}-2\mu\mathbf{D}(\tilde{\mathbf u})
		=\tau \tilde H+\tau \bar H,
	\end{cases}
\end{equation}
where the nonlinear terms are given by
\begin{align*}
	\tilde F &= -\nabla \tilde a\cdot\mathbf{u}^\tau-a^\tau\ddiv\tilde{\mathbf u},\\
	\bar F &= -\nabla a^0\cdot\tilde{\mathbf u}-\tilde a\ddiv\mathbf{u}^0,\\
	\tilde G &= -\mathbf{u}^\tau\cdot\nabla\tilde{\mathbf u}+[1-(a^\tau+1)^{\gamma-2}]\nabla\tilde a
	+\frac{\nabla \tilde a}{a^\tau+1}\mathbf{S}^\tau,\\
	\bar G &= -\tilde{\mathbf u}\cdot\nabla\mathbf{u}^0
	+\bigl[(a^0+1)^{\gamma-2}-(a^\tau+1)^{\gamma-2}\bigr]\nabla a^0\\
	&\quad+\Bigl[\frac{\tilde{\mathbf S}}{a^\tau+1}+\mathbf{S}^0\frac{\tilde a}{(a^\tau+1)(a^0+1)}\Bigr]\nabla a^0,\\
	\tilde H &= -\mathbf{u}^\tau\cdot\nabla\tilde{\mathbf S}
	+\nabla\tilde{\mathbf u}\cdot\mathbf{S}^\tau+\mathbf{S}^\tau\cdot(\nabla\tilde{\mathbf u})^T,\\
	\bar H &= -\mathbf{u}^\tau\cdot\nabla\mathbf{S}^0
	+\nabla\mathbf{u}^0\cdot\mathbf{S}^\tau+\mathbf{S}^\tau\cdot(\nabla{\mathbf{u}^0})^T+\mathbf{S}^0_t.
\end{align*}

Now set
\[
\eta(t):=\int_{0}^{t} \bigl(\|(a^\tau_{\Phi}, \mathbf{u}^\tau_{\Phi}, \mathbf{S}^\tau_{\Phi})\|_{\dot{B}^{\frac{3}{2}}_{2,1}}
+\|(a_{\Gamma}^0, \mathbf{u}_{\Gamma}^0)\|_{\dot{B}^{\frac{3}{2}}_{2,1}}\bigr)\,\dif s,
\]
and define
\begin{equation}\label{definerd}
	f_\Psi(t,x) := \mathcal{F}^{-1}_{\xi \to x} \bigl( e^{\Psi(t,\xi)} \hat{f}(t,\xi) \bigr) = e^{\Psi(t,\Lambda)} f(t,x),
\end{equation}
with $\Psi(t,\xi):=(2\lambda_0-\nu\eta(t))|\xi|$ and $\nu\ge\lambda$ to be chosen later. For sufficiently small $\varepsilon$, we have
\[
\Psi(t,\xi)\le \min\{\Phi(t,\xi),\Gamma(t,\xi)\},\qquad \Psi(t,\xi)\ge 0.
\]

Applying the transform $e^{\Psi(t,\Lambda)}$ to the error system \eqref{rdfc} and projecting onto dyadic blocks, we obtain for each $k$:
\begin{equation}\label{rdjdfc}
	\begin{cases}
		\partial_t\tilde a_{\Psi,k}+\nu\dot{\eta}(t)\Lambda\tilde a_{\Psi,k}+\ddiv \tilde{\mathbf u}_{\Psi,k}
		=\tilde F_{\Psi,k}+\bar F_{\Psi,k},\\[1mm]
		\partial_t\tilde{\mathbf u}_{\Psi,k}+\nu\dot{\eta}(t)\Lambda\tilde{\mathbf u}_{\Psi,k}
		+\nabla\tilde a_{\Psi,k}-\ddiv\tilde{\mathbf S}_{\Psi,k}
		=\tilde G_{\Psi,k}+\bar G_{\Psi,k},\\[1mm]
		\tau\partial_t\tilde{\mathbf S}_{\Psi,k}+\tau\nu\dot{\eta}(t)\Lambda\tilde{\mathbf S}_{\Psi,k}
		+\tilde{\mathbf S}_{\Psi,k}-2\mu\mathbf{D}(\tilde{\mathbf u})_{\Psi,k}
		=\tau \tilde H_{\Psi,k}+\tau \bar H_{\Psi,k}.
	\end{cases}
\end{equation}

Multiplying the three equations respectively by $2\tilde a_{\Psi,k}$, $2\tilde{\mathbf u}_{\Psi,k}$, and $\frac1\mu\tilde{\mathbf S}_{\Psi,k}$, integrating over $\mathbb R^n$, and summing, we get
\begin{align*}
	\frac12\frac{\dif}{\dif t}
	&\left(2\|\tilde a_{\Psi,k}\|_{L^2}^2+2\|\tilde{\mathbf u}_{\Psi,k}\|_{L^2}^2+\frac{\tau}{\mu}\|\tilde{\mathbf S}_{\Psi,k}\|_{L^2}^2\right)
	\\
	&+\nu\dot{\eta}(t)\left(2\|\Lambda^{\frac12}\tilde a_{\Psi,k}\|_{L^2}^2
	+2\|\Lambda^{\frac12}\tilde{\mathbf u}_{\Psi,k}\|_{L^2}^2
	+\frac{\tau}{\mu}\|\Lambda^{\frac12}\tilde{\mathbf S}_{\Psi,k}\|_{L^2}^2\right)
	+\frac1\mu\|\tilde{\mathbf S}_{\Psi,k}\|_{L^2}^2\\
	&=2\langle \tilde F_{\Psi,k},\tilde a_{\Psi,k}\rangle+2\langle \bar F_{\Psi,k},\tilde a_{\Psi,k}\rangle
	+2\langle \tilde G_{\Psi,k},\tilde{\mathbf u}_{\Psi,k}\rangle+2\langle \bar G_{\Psi,k},\tilde{\mathbf u}_{\Psi,k}\rangle\\
	&\quad+\frac{\tau}{\mu}\langle \tilde H_{\Psi,k},\tilde{\mathbf S}_{\Psi,k}\rangle
	+\frac{\tau}{\mu}\langle \bar H_{\Psi,k},\tilde{\mathbf S}_{\Psi,k}\rangle .
\end{align*}

\medskip
\noindent\textbf{High-frequency estimates.}
Applying $\Lambda^{-1}\nabla$ to the first equation in \eqref{rdjdfc}, $\Lambda^{-1}$ to the second, and $\Lambda^{-1}\ddiv$ to the third, we obtain
\begin{equation}\label{rdhfg}
	\begin{cases}
		\partial_t\Lambda^{-1}\nabla\tilde a_{\Psi,k}
		+\nu\dot\eta(t)\nabla\tilde a_{\Psi,k}
		+\Lambda^{-1}\nabla\mathrm{div}\,\tilde{\mathbf u}_{\Psi,k}
		=\Lambda^{-1}\nabla(\tilde F_{\Psi,k}+\bar F_{\Psi,k}),\\[1mm]
		\partial_t\Lambda^{-1}\tilde{\mathbf u}_{\Psi,k}
		+\nu\dot\eta(t)\tilde{\mathbf u}_{\Psi,k}
		+\Lambda^{-1}\nabla\tilde a_{\Psi,k}
		-\Lambda^{-1}\mathrm{div}\,\tilde{\mathbf S}_{\Psi,k}
		=\Lambda^{-1}(\tilde G_{\Psi,k}+\bar G_{\Psi,k}),\\[1mm]
		\tau\partial_t\Lambda^{-1}\mathrm{div}\,\tilde{\mathbf S}_{\Psi,k}
		+\tau\nu\dot\eta(t)\mathrm{div}\,\tilde{\mathbf S}_{\Psi,k}
		+\Lambda^{-1}\mathrm{div}\,\tilde{\mathbf S}_{\Psi,k}
		+\mu(\Lambda-\nabla\Lambda^{-1}\mathrm{div})\tilde{\mathbf u}_{\Psi,k}\\
		\qquad\qquad\qquad\qquad\qquad\qquad\qquad\qquad\qquad
		=\Lambda^{-1}\mathrm{div}(\tau\tilde H_{\Psi,k}+\tau\bar H_{\Psi,k}).
	\end{cases}
\end{equation}

Taking the $L^2$ inner product of the first equation with $\Lambda^{-1}\tilde{\mathbf u}_{\Psi,k}$, and the second with $\Lambda^{-1}\nabla\tilde a_{\Psi,k}$, then adding the results, yields
\begin{equation}\label{rdhpgj}
	\begin{aligned}
		&\partial_t\langle \Lambda^{-1}\nabla\tilde a_{\Psi,k},\Lambda^{-1}\tilde{\mathbf u}_{\Psi,k}\rangle
		+2\nu\dot\eta(t)\langle \nabla\tilde a_{\Psi,k},\Lambda^{-1}\tilde{\mathbf u}_{\Psi,k}\rangle
		-\|\Lambda^{-1}\mathrm{div}\,\tilde{\mathbf u}_{\Psi,k}\|_{L^2}^2+\|\tilde a_{\Psi,k}\|_{L^2}^2 \\
		&\quad-\langle \Lambda^{-1}\mathrm{div}\,\tilde{\mathbf S}_{\Psi,k},\Lambda^{-1}\nabla\tilde a_{\Psi,k}\rangle \\
		&= \langle \Lambda^{-1}\nabla(\tilde F_{\Psi,k}+\bar F_{\Psi,k}),\Lambda^{-1}\tilde{\mathbf u}_{\Psi,k}\rangle
		+\langle \Lambda^{-1}(\tilde G_{\Psi,k}+\bar G_{\Psi,k}),\Lambda^{-1}\nabla\tilde a_{\Psi,k}\rangle .
	\end{aligned}
\end{equation}

Next, take the inner product of the second equation with $\frac{\tau}{\mu}\Lambda^{-1}\mathrm{div}\,\tilde{\mathbf S}_{\Psi,k}$, and the third with $\frac1\mu\Lambda^{-1}\tilde{\mathbf u}_{\Psi,k}$; adding them gives
\begin{equation}\label{rddpgj}
	\begin{aligned}
		&\frac{\tau}{\mu}\partial_t\langle \Lambda^{-1}\tilde{\mathbf u}_{\Psi,k},
		\Lambda^{-1}\mathrm{div}\,\tilde{\mathbf S}_{\Psi,k}\rangle
		+\frac{2\tau}{\mu}\nu\dot\eta(t)
		\langle \Lambda^{-1}\tilde{\mathbf u}_{\Psi,k},\mathrm{div}\,\tilde{\mathbf S}_{\Psi,k}\rangle \\
		&\quad+\frac{\tau}{\mu}\langle \Lambda^{-1}\nabla\tilde a_{\Psi,k},
		\Lambda^{-1}\mathrm{div}\,\tilde{\mathbf S}_{\Psi,k}\rangle
		-\frac{\tau}{\mu}\|\Lambda^{-1}\mathrm{div}\,\tilde{\mathbf S}_{\Psi,k}\|_{L^2}^2 \\
		&\quad+\frac1\mu\langle \Lambda^{-1}\mathrm{div}\,\tilde{\mathbf S}_{\Psi,k},
		\Lambda^{-1}\tilde{\mathbf u}_{\Psi,k}\rangle
		+\|\tilde{\mathbf u}_{\Psi,k}\|_{L^2}^2
		+\|\Lambda^{-1}\mathrm{div}\,\tilde{\mathbf u}_{\Psi,k}\|_{L^2}^2 \\
		&= \frac{\tau}{\mu}\langle \Lambda^{-1}(\tilde G_{\Psi,k}+\bar G_{\Psi,k}),
		\Lambda^{-1}\mathrm{div}\,\tilde{\mathbf S}_{\Psi,k}\rangle
		+\frac1\mu\langle \Lambda^{-1}\mathrm{div}(\tau\tilde H_{\Psi,k}+\tau\bar H_{\Psi,k}),
		\Lambda^{-1}\tilde{\mathbf u}_{\Psi,k}\rangle .
	\end{aligned}
\end{equation}

For $k\ge k_0$, define
\[
\widetilde E_k^2:=
2\|\tilde a_{\Psi,k}\|_{L^2}^2+2\|\tilde{\mathbf u}_{\Psi,k}\|_{L^2}^2
+\frac{\tau}{\mu}\|\tilde{\mathbf S}_{\Psi,k}\|_{L^2}^2
+\langle \Lambda^{-1}\nabla\tilde a_{\Psi,k},\Lambda^{-1}\tilde{\mathbf u}_{\Psi,k}\rangle
+\langle \Lambda^{-1}\tilde{\mathbf u}_{\Psi,k},\frac{\tau}{\mu}\Lambda^{-1}\mathrm{div}\,\tilde{\mathbf S}_{\Psi,k}\rangle,
\]
\[
E_k^2:=\|\tilde a_{\Psi,k}\|_{L^2}^2+\|\tilde{\mathbf u}_{\Psi,k}\|_{L^2}^2+\frac{\tau}{\mu}\|\tilde{\mathbf S}_{\Psi,k}\|_{L^2}^2,
\qquad
D_k^2:=\|\tilde a_{\Psi,k}\|_{L^2}^2+\|\tilde{\mathbf u}_{\Psi,k}\|_{L^2}^2+\|\tilde{\mathbf S}_{\Psi,k}\|_{L^2}^2.
\]
By Bernstein's inequality (choosing $k_0=3$), we have
\[
\frac12 E_k^2\le \widetilde E_k^2\le 3E_k^2.
\]
Adding the energy identities for \eqref{rdjdfc}, \eqref{rdhpgj} and \eqref{rddpgj}, and using Hölder's inequality together with the above equivalence, we obtain
\[
\frac{\dif}{\dif t}\widetilde E_k^2
+\frac14\nu\dot\eta(t)2^k\widetilde E_k^2
+\frac18 D_k^2
\lesssim\Bigl(
\|(\tilde F_{\Psi,k},\tilde G_{\Psi,k}, \sqrt{\frac{\tau}{\mu}}\tilde H_{\Psi,k})\|_{L^2}
+\|(\bar F_{\Psi,k},\bar G_{\Psi,k}, \sqrt{\frac{\tau}{\mu}}\bar H_{\Psi,k})\|_{L^2}
\Bigr)\widetilde E_k .
\]
Since $E_k\le D_k$, we get
\begin{equation}\label{rdgpbe}
	\frac{\dif}{\dif t} E_k
	+\frac14\nu\dot\eta(t)2^k E_k
	+\frac18 D_k
	\lesssim
	\|(\tilde F_{\Psi,k},\tilde G_{\Psi,k}, \sqrt{\frac{\tau}{\mu}}\tilde H_{\Psi,k})\|_{L^2}
	+\|(\bar F_{\Psi,k},\bar G_{\Psi,k}, \sqrt{\frac{\tau}{\mu}}\bar H_{\Psi,k})\|_{L^2}
	.
\end{equation}
Multiplying \eqref{rdgpbe} by $2^{\frac n2 k}$, integrating in time and summing over $k>k_0$, we obtain the high-frequency estimate
\begin{equation}\label{rdgpbesov}
	\begin{aligned}
		&\|(\tilde a_{\Psi},\tilde{\mathbf u}_{\Psi},\sqrt{\frac{\tau}{\mu}}\,\tilde{\mathbf S}_{\Psi})\|^h_{\dot B_{2,1}^{\frac{3}{2}}}
		+\nu\int_0^t\dot\eta(s)
		\|(\tilde a_{\Psi},\tilde{\mathbf u}_{\Psi},\sqrt{\frac{\tau}{\mu}}\,\tilde{\mathbf S}_{\Psi})\|^h_{\dot B_{2,1}^{\frac{5}{2}}}\,\dif s \\
		&\quad+\int_0^t\|(\tilde a_{\Psi},\tilde{\mathbf u}_{\Psi},\tilde{\mathbf S}_{\Psi})\|^h_{\dot B_{2,1}^{\frac{3}{2}}}\,\dif s \\
		&\lesssim
		\|(\tilde a_{\Psi},\tilde{\mathbf u}_{\Psi},\sqrt{\frac{\tau}{\mu}}\,\tilde{\mathbf S}_{\Psi})(0)\|^h_{\dot B_{2,1}^{\frac{3}{2}}}
		+\int_0^t
		\bigl(\|(\tilde F_{\Psi},\tilde G_{\Psi},\sqrt{\frac{\tau}{\mu}}\tilde H_{\Psi})\|^h_{\dot B_{2,1}^{\frac{3}{2}}}
		+\|(\bar F_{\Psi},\bar G_{\Psi}, \sqrt{\frac{\tau}{\mu}}\bar H_{\Psi})\|^h_{\dot B_{2,1}^{\frac{3}{2}}}
		\bigr)\,\dif t
		.
	\end{aligned}
\end{equation}

\medskip
\noindent\textbf{Low-frequency estimates.}
Applying $\nabla$ to the first equation in \eqref{rdjdfc} and $\ddiv$ to the third gives
\begin{equation}\label{rdlfg}
	\begin{cases}
		\partial_t\nabla\tilde a_{\Psi,k}+\nu\dot\eta(t)\nabla\Lambda\tilde a_{\Psi,k}
		+\nabla\mathrm{div}\,\tilde{\mathbf u}_{\Psi,k}
		=\nabla(\tilde F_{\Psi,k}+\bar F_{\Psi,k}),\\[1mm]
		\partial_t\tilde{\mathbf u}_{\Psi,k}+\nu\dot\eta(t)\Lambda\tilde{\mathbf u}_{\Psi,k}
		+\nabla\tilde a_{\Psi,k}-\mathrm{div}\,\tilde{\mathbf S}_{\Psi,k}
		=\tilde G_{\Psi,k}+\bar G_{\Psi,k},\\[1mm]
		\tau\partial_t\mathrm{div}\,\tilde{\mathbf S}_{\Psi,k}
		+\tau\nu\dot\eta(t)\mathrm{div}\Lambda\tilde{\mathbf S}_{\Psi,k}
		+\mathrm{div}\,\tilde{\mathbf S}_{\Psi,k}
		-\mu(\Delta+\nabla\mathrm{div})\tilde{\mathbf u}_{\Psi,k}
		=\mathrm{div}(\tau\tilde H_{\Psi,k}+\tau\bar H_{\Psi,k}).
	\end{cases}
\end{equation}

Taking the inner product of the first equation with $\tilde{\mathbf u}_{\Psi,k}$, and the second with $\nabla\tilde a_{\Psi,k}$, then adding, yields
\begin{equation}\label{3.11}
	\begin{aligned}
		&\partial_t\langle \nabla\tilde a_{\Psi,k},\tilde{\mathbf u}_{\Psi,k}\rangle
		+2\nu\dot\eta(t)\langle \nabla\Lambda\tilde a_{\Psi,k},\tilde{\mathbf u}_{\Psi,k}\rangle
		-\|\mathrm{div}\,\tilde{\mathbf u}_{\Psi,k}\|_{L^2}^2
		+\|\nabla\tilde a_{\Psi,k}\|_{L^2}^2
		-\langle \mathrm{div}\,\tilde{\mathbf S}_{\Psi,k},\nabla\tilde a_{\Psi,k}\rangle \\
		&= \langle \nabla(\tilde F_{\Psi,k}+\bar F_{\Psi,k}),\tilde{\mathbf u}_{\Psi,k}\rangle
		+\langle \tilde G_{\Psi,k}+\bar G_{\Psi,k},\nabla\tilde a_{\Psi,k}\rangle .
	\end{aligned}
\end{equation}

Next, take the inner product of the second equation with $\frac{\tau}{\mu}\mathrm{div}\,\tilde{\mathbf S}_{\Psi,k}$, and the third with $\frac1\mu\tilde{\mathbf u}_{\Psi,k}$, add and integrate by parts:
\begin{equation}\label{3.12}
	\begin{aligned}
		&\frac{\tau}{\mu}\partial_t\langle \tilde{\mathbf u}_{\Psi,k},
		\mathrm{div}\,\tilde{\mathbf S}_{\Psi,k}\rangle
		+\frac{2\tau}{\mu}\nu\dot\eta(t)
		\langle \Lambda\tilde{\mathbf u}_{\Psi,k},\mathrm{div}\,\tilde{\mathbf S}_{\Psi,k}\rangle \\
		&\quad+\frac{\tau}{\mu}\langle \nabla\tilde a_{\Psi,k},
		\mathrm{div}\,\tilde{\mathbf S}_{\Psi,k}\rangle
		-\frac{\tau}{\mu}\|\mathrm{div}\,\tilde{\mathbf S}_{\Psi,k}\|_{L^2}^2
		+\frac1\mu\langle \mathrm{div}\,\tilde{\mathbf S}_{\Psi,k},\tilde{\mathbf u}_{\Psi,k}\rangle
		+\|\Lambda\tilde{\mathbf u}_{\Psi,k}\|_{L^2}^2
		+\|\mathrm{div}\,\tilde{\mathbf u}_{\Psi,k}\|_{L^2}^2 \\
		&= \frac{\tau}{\mu}\langle \tilde G_{\Psi,k}+\bar G_{\Psi,k},
		\mathrm{div}\,\tilde{\mathbf S}_{\Psi,k}\rangle
		+\frac1\mu\langle \mathrm{div}(\tau\tilde H_{\Psi,k}+\tau\bar H_{\Psi,k}),\tilde{\mathbf u}_{\Psi,k}\rangle .
	\end{aligned}
\end{equation}

For $k\le k_0$, define
\[
\widetilde E_k^2:=
2\|\tilde a_{\Psi,k}\|_{L^2}^2+2\|\tilde{\mathbf u}_{\Psi,k}\|_{L^2}^2
+\frac{\tau}{\mu}\|\tilde{\mathbf S}_{\Psi,k}\|_{L^2}^2
+\frac{3}{8}2^{-k}\langle \nabla\tilde a_{\Psi,k},\tilde{\mathbf u}_{\Psi,k}\rangle
+\frac{3}{8}2^{-k}\langle \tilde{\mathbf u}_{\Psi,k},\frac{\tau}{\mu}\mathrm{div}\,\tilde{\mathbf S}_{\Psi,k}\rangle,
\]
\[
\widetilde D_k^2:=\|\tilde a_{\Psi,k}\|_{L^2}^2+\|\tilde{\mathbf u}_{\Psi,k}\|_{L^2}^2+2^{-2k}\|\tilde{\mathbf S}_{\Psi,k}\|_{L^2}^2,
\]
and let $E_k,D_k$ be as before. Bernstein's inequality gives, for $k_0=3$,
\[
\frac12 E_k^2\le \widetilde E_k^2\le 3E_k^2.
\]
Adding the identities from \eqref{rdjdfc}, $\frac38 2^{-k}\times$\eqref{3.11}, and $\frac38 2^{-k}\times$\eqref{3.12}, then using Hölder's inequality, we obtain
\[
\frac{\dif}{\dif t}\widetilde E_k^2
+\frac14\nu\dot\eta(t)2^k\widetilde E_k^2
+\frac1{2\sqrt2}2^k D_k\widetilde D_k
\lesssim\Bigl(
\|(\tilde F_{\Psi,k},\tilde G_{\Psi,k}, \sqrt{\frac{\tau}{\mu}}\tilde H_{\Psi,k})\|_{L^2}
+\|(\bar F_{\Psi,k},\bar G_{\Psi,k}, \sqrt{\frac{\tau}{\mu}}\bar H_{\Psi,k})\|_{L^2}
\Bigr)\widetilde E_k,
\]
and hence
\begin{equation}\label{3.13}
	\frac{\dif}{\dif t}E_k
	+\frac14\nu\dot\eta(t)2^kE_k
	+\frac1{2\sqrt2}2^k \widetilde D_k
	\lesssim
	\|(\tilde F_{\Psi,k},\tilde G_{\Psi,k}, \sqrt{\frac{\tau}{\mu}}\tilde H_{\Psi,k})\|_{L^2}
	+\|(\bar F_{\Psi,k},\bar G_{\Psi,k}, \sqrt{\frac{\tau}{\mu}}\bar H_{\Psi,k})\|_{L^2}
	.
\end{equation}
Multiplying \eqref{3.13} by $2^{\frac n2 k}$, integrating over $(0,t)$ and summing over $k\le k_0$, we get the low-frequency estimate
\begin{equation}\label{3.14}
	\begin{aligned}
		&\|(\tilde a_{\Psi},\tilde{\mathbf u}_{\Psi},\sqrt{\frac{\tau}{\mu}}\,\tilde{\mathbf S}_{\Psi})\|^l_{\dot B_{2,1}^{\frac{3}{2}}}
		+\nu\int_0^t\dot\eta(s)
		\|(\tilde a_{\Psi},\tilde{\mathbf u}_{\Psi},\sqrt{\frac{\tau}{\mu}}\,\tilde{\mathbf S}_{\Psi})\|^l_{\dot B_{2,1}^{\frac{5}{2}}}\,\dif t \\
		&\quad+\int_0^t\|(\tilde a_{\Psi},\tilde{\mathbf u}_{\Psi})\|^l_{\dot B_{2,1}^{\frac{5}{2}}}\,\dif t
		+\int_0^t\|\tilde{\mathbf S}_{\Psi}\|^l_{\dot B_{2,1}^{\frac{3}{2}}}\,\dif t \\
		&\lesssim
		\|(\tilde a_{\Psi},\tilde{\mathbf u}_{\Psi},\sqrt{\frac{\tau}{\mu}}\,\tilde{\mathbf S}_{\Psi})(0)\|^l_{\dot B_{2,1}^{\frac{3}{2}}}
		+\int_0^t
		\bigl(\|(\tilde F_{\Psi},\tilde G_{\Psi}, \sqrt{\frac{\tau}{\mu}}\tilde H_{\Psi})\|^l_{\dot B_{2,1}^{\frac{3}{2}}}
		+\|(\bar F_{\Psi},\bar G_{\Psi}, \sqrt{\frac{\tau}{\mu}}\bar H_{\Psi})\|^l_{\dot B_{2,1}^{\frac{3}{2}}}
		\bigr)\,\dif t.
	\end{aligned}
\end{equation}

Combining the high-frequency estimate \eqref{rdgpbesov} and the low-frequency estimate \eqref{3.14}, we obtain the uniform bound
\begin{equation}\label{rdxygj}
	\begin{aligned}
		&\|(\tilde a_{\Psi},\tilde{\mathbf u}_{\Psi},\sqrt{\frac{\tau}{\mu}}\,\tilde{\mathbf S}_{\Psi})\|_{\dot B_{2,1}^{\frac{3}{2}}}
		+\nu\int_0^t\dot\eta(s)
		\|(\tilde a_{\Psi},\tilde{\mathbf u}_{\Psi},\sqrt{\frac{\tau}{\mu}}\,\tilde{\mathbf S}_{\Psi})\|_{\dot B_{2,1}^{\frac{5}{2}}}\,\dif t \\
		&\quad+\int_0^t\|(\tilde a_{\Psi},\tilde{\mathbf u}_{\Psi})\|_{\dot B_{2,1}^{\frac{3}{2},\frac{5}{2}}}\,\dif t
		+\int_0^t\|\tilde{\mathbf S}_{\Psi}\|_{\dot B_{2,1}^{\frac{3}{2}}}\,\dif t \\
		&\lesssim
		\|e^{2\lambda\Lambda}(\tilde a_0,\tilde{\mathbf u}_0,\sqrt{\frac{\tau}{\mu}}\,\tilde{\mathbf S}_0)\|_{\dot B_{2,1}^{\frac{3}{2}}}
		+\int_0^t
		\bigl(\|(\tilde F_{\Psi},\tilde G_{\Psi}, \sqrt{\frac{\tau}{\mu}}\tilde H_{\Psi})\|_{\dot B_{2,1}^{\frac{3}{2}}}
		+\|(\bar F_{\Psi},\bar G_{\Psi}, \sqrt{\frac{\tau}{\mu}}\bar H_{\Psi})\|_{\dot B_{2,1}^{\frac{3}{2}}}
		\bigr)\,\dif t
		.
	\end{aligned}
\end{equation}

Using the same method as in Lemma~\ref{efa}, we have
\begin{equation}\label{rd1.1}
	\int_0^t
	\|(\tilde F_{\Psi},\tilde G_{\Psi}, \sqrt{\frac{\tau}{\mu}}\tilde H_{\Psi})\|_{\dot B_{2,1}^{\frac{3}{2}}}\,\dif t
\lesssim\int_0^t \|(a_{\Psi}^\tau,\mathbf{u}_{\Psi}^\tau,\mathbf{S}_{\Psi}^\tau)\|_{\dot{B}^{\frac{3}{2}}_{2,1}}
	\|(\tilde{a}_{\Psi},\tilde{\mathbf{u}}_{\Psi},\sqrt{\frac{\tau}{\mu}}\tilde{\mathbf{S}}_{\Psi})\|_{\dot{B}^{\frac{5}{2}}_{2,1}}\,\dif t.
\end{equation}

From Lemma~\ref{fge}, we get
\begin{equation}\label{rd1.2}
	\int_0^t
	\|\bar F_{\Psi}\|_{\dot B_{2,1}^{\frac{3}{2}}}\,\dif t
\lesssim\int_{0}^{t} \left(\|\tilde{\mathbf{u}}_{\Psi}\|_{\dot{B}^{\frac{3}{2}}_{2,1}}\|a^0_{\Psi}\|_{\dot{B}^{\frac{5}{2}}_{2,1}}
	+\|\tilde{a}_{\Psi}\|_{\dot{B}^{\frac{3}{2}}_{2,1}}\|\mathbf{u}^0_{\Psi}\|_{\dot{B}^{\frac{5}{2}}_{2,1}}\right)\,\dif t,
\end{equation}
\begin{equation}\label{rd1.3}
	\begin{aligned}
		\int_0^t
		\|\bar G_{\Phi}\|_{\dot B_{2,1}^{\frac{3}{2}}}\,\dif t
		&\lesssim\int_{0}^{t} \Bigl(\|\tilde{\mathbf{u}}_{\Psi}\|_{\dot{B}^{\frac{3}{2}}_{2,1}}\|\mathbf{u}^0_{\Psi}\|_{\dot{B}^{\frac{5}{2}}_{2,1}}
		+\|\tilde{a}_{\Psi}\|_{\dot{B}^{\frac{3}{2}}_{2,1}}\|a^0_{\Phi}\|_{\dot{B}^{\frac{5}{2}}_{2,1}}\\
		&\qquad+\|\tilde{a}_{\Psi}\|_{\dot{B}^{\frac{3}{2}}_{2,1}}\|\mathbf{u}^0_{\Psi}\|_{\dot{B}^{\frac{5}{2}}_{2,1}}\|a^0_{\Psi}\|_{\dot{B}^{\frac{5}{2}}_{2,1}}\Bigr)\,\dif s\\
		&\quad+ \int_{0}^{t} \|\tilde{\mathbf{S}}_{\Psi}\|_{\dot{B}^{\frac{3}{2}}_{2,1}}\|\mathbf{u}^0_{\Psi}\|_{\dot{B}^{\frac{5}{2}}_{2,1}}\,\dif t,
	\end{aligned}
\end{equation}
and
\begin{equation}\label{rd1.4}
	\begin{aligned}
		\int_0^t
		\|\sqrt{\frac{\tau}{\mu}}\bar H_{\Psi}\|_{\dot B_{2,1}^{\frac{3}{2}}}\,\dif t
		&\lesssim\int_{0}^{t} \Bigl(\|\mathbf{u}^\tau_{\Psi}\|_{\dot{B}^{\frac{3}{2}}_{2,1}}\|\mathbf{u}^0_{\Psi}\|_{\dot{B}^{\frac{5}{2}}_{2,1}}
		+\|\mathbf{S}^\tau_{\Psi}\|_{\dot{B}^{\frac{3}{2}}_{2,1}}\|\mathbf{u}^0_{\Psi}\|_{\dot{B}^{\frac{5}{2}}_{2,1}}\Bigr)\,\dif t\\
		&\quad+ \sqrt{\tau}\int_{0}^{t} \Bigl(\|\mathbf{u}^0_{\Psi}\|_{\dot{B}^{\frac{9}{2}}_{2,1}}
		+\|a^0_{\Psi}\|_{\dot{B}^{\frac{7}{2}}_{2,1}}
		+\|\mathbf{u}^0_{\Psi}\|_{\dot{B}^{\frac{5}{2}}_{2,1}}^2
		+\|\mathbf{u}^0_{\Psi}\|_{\dot{B}^{\frac{3}{2}}_{2,1}}\|\mathbf{u}^0_{\Psi}\|_{\dot{B}^{\frac{7}{2}}_{2,1}}
		+\|a^0_{\Psi}\|_{\dot{B}^{\frac{5}{2}}_{2,1}}^2\Bigr)\,\dif t\\
		&\quad+ \sqrt{\tau}\int_{0}^{t} \Bigl(\|\mathbf{u}^0_{\Psi}\|_{\dot{B}^{\frac{5}{2}}_{2,1}}\|a^0_{\Psi}\|_{\dot{B}^{\frac{5}{2}}_{2,1}}
		+\|a^0_{\Psi}\|_{\dot{B}^{\frac{7}{2}}_{2,1}}\|\mathbf{u}^0_{\Psi}\|_{\dot{B}^{\frac{5}{2}}_{2,1}}
		+\|a^0_{\Psi}\|_{\dot{B}^{\frac{5}{2}}_{2,1}}\|\mathbf{u}^0_{\Psi}\|_{\dot{B}^{\frac{7}{2}}_{2,1}}\Bigr)\,\dif t.
	\end{aligned}
\end{equation}

Combining \eqref{rdxygj}, \eqref{rd1.1}–\eqref{rd1.4}, we obtain
\begin{align*}
	&\|(\tilde a_{\Psi},\tilde{\mathbf u}_{\Psi},\sqrt{\frac{\tau}{\mu}}\,\tilde{\mathbf S}_{\Psi})\|_{\dot B_{2,1}^{\frac{3}{2}}}
	+\nu\int_0^t\dot\eta(s)
	\|(\tilde a_{\Psi},\tilde{\mathbf u}_{\Psi},\sqrt{\frac{\tau}{\mu}}\,\tilde{\mathbf S}_{\Psi})\|_{\dot B_{2,1}^{\frac{5}{2}}}\,\dif t \\
	&\quad+\int_0^t\|(\tilde a_{\Psi},\tilde{\mathbf u}_{\Psi})\|_{\dot B_{2,1}^{\frac{5}{2}}}\,\dif t
	+\int_0^t\|\tilde{\mathbf S}_{\Psi}\|_{\dot B_{2,1}^{\frac{3}{2}}}\,\dif t\\
	&\lesssim
	\|(\tilde a_{\Psi},\tilde{\mathbf u}_{\Psi},\sqrt{\frac{\tau}{\mu}}\,\tilde{\mathbf S}_{\Psi})(0)\|_{\dot B_{2,1}^{\frac{3}{2}}}
	+\int_0^t \|(a_{\Psi}^\tau,\mathbf{u}_{\Psi}^\tau,\mathbf{S}_{\Psi}^\tau)\|_{\dot{B}^{\frac{3}{2}}_{2,1}}
	\|(\tilde{a}_{\Psi},\tilde{\mathbf{u}}_{\Psi},\sqrt{\frac{\tau}{\mu}}\tilde{\mathbf{S}}_{\Psi})\|_{\dot{B}^{\frac{5}{2}}_{2,1}}\,\dif t\\
	&\quad+\int_0^t \|(\tilde{a}_{\Psi},\tilde{\mathbf{u}}_{\Psi})\|_{\dot{B}^{\frac{3}{2}}_{2,1}}
	\|(a^0_{\Psi},\mathbf{u}^0_{\Psi})\|_{\dot{B}^{\frac{5}{2}}_{2,1}}\,\dif t
	+\sqrt{\tau} \int_{0}^{t} \bigl(\|\mathbf{u}^0_{\Psi}\|_{\dot{B}^{\frac{5}{2}}_{2,1}}+\|\mathbf{u}^0_{\Psi}\|_{\dot{B}^{\frac{7}{2}}_{2,1}}
	+\|\mathbf{u}^0_{\Psi}\|_{\dot{B}^{\frac{9}{2}}_{2,1}}+\|a^0_{\Psi}\|_{\dot{B}^{\frac{5}{2}}_{2,1}}+\|a^0_{\Psi}\|_{\dot{B}^{\frac{7}{2}}_{2,1}}\bigr)\,\dif t
.
\end{align*}
Choosing $\nu$ sufficiently large, the second term on the right-hand side can be absorbed into the left-hand side. Then Gronwall's inequality gives
\begin{align*}
	&\|(\tilde a_{\Psi},\tilde{\mathbf u}_{\Psi},\sqrt{\frac{\tau}{\mu}}\,\tilde{\mathbf S}_{\Psi})\|_{\dot B_{2,1}^{\frac{3}{2}}}
	+\int_0^t\|(\tilde a_{\Psi},\tilde{\mathbf u}_{\Psi})\|_{\dot B_{2,1}^{\frac{5}{2}}}\,\dif t
	+\int_0^t\|\tilde{\mathbf S}_{\Psi}\|_{\dot B_{2,1}^{\frac{3}{2}}}\,\dif t \\
	&\lesssim\left(
	\|(\tilde a_{\Psi},\tilde{\mathbf u}_{\Psi},\sqrt{\frac{\tau}{\mu}}\,\tilde{\mathbf S}_{\Psi})(0)\|_{\dot B_{2,1}^{\frac{3}{2}}}
	+ \sqrt{\frac{\tau}{\mu}}\|\mathbf{u}^0_{\Psi}\|_{L^1_t(\dot{B}^{\frac{5}{2}}_{2,1}\cap\dot{B}^{\frac{7}{2}}_{2,1}\cap\dot{B}^{\frac{9}{2}}_{2,1})}
	+\sqrt{\frac{\tau}{\mu}}\|a^0_{\Psi}\|_{L^1_t(\dot{B}^{\frac{5}{2}}_{2,1}\cap\dot{B}^{\frac{7}{2}}_{2,1})}
	\right)\\
	&\quad\times \exp\left\{\|(a^0_{\Psi},\mathbf{u}^0_{\Psi})\|_{L_t^1(\dot{B}^{\frac{5}{2}}_{2,1})}\right\}.
\end{align*}
Using the initial condition \eqref{scjs} and the bound \eqref{jdgj1}, and by interpolation, we obtain
\[
\|(\tilde a_{\Psi},\tilde{\mathbf u}_{\Psi},\sqrt{\frac{\tau}{\mu}}\,\tilde{\mathbf S}_{\Psi})\|_{\dot B_{2,1}^{\frac{3}{2}}}
+\int_0^\infty\|(\tilde a_{\Psi},\tilde{\mathbf u}_{\Psi})\|_{\dot B_{2,1}^{\frac{5}{2}}}\,\dif t
+\int_0^\infty\|\tilde{\mathbf S}_{\Psi}\|_{\dot B_{2,1}^{\frac{3}{2}}}\,\dif t
\lesssim\sqrt{\tau}.
\]
This completes the proof of Theorem~\ref{th1.2}.

\end{document}